\documentclass[11pt]{article}

\usepackage{amsmath,amssymb,amsthm,bm}
\usepackage[margin=1in]{geometry}
\usepackage{booktabs}
\usepackage{tikz}
\usetikzlibrary{arrows.meta}
\usepackage{xcolor}
\usepackage{hyperref}
\usepackage{comment}

\newcommand{\revised}[1]{#1}

\newcommand{\revisedblock}{}

\newtheorem{theorem}{Theorem}
\newtheorem{proposition}{Proposition}
\newtheorem{lemma}{Lemma}
\newtheorem{corollary}{Corollary}
\theoremstyle{definition}
\newtheorem{example}{Example}
\theoremstyle{remark}
\newtheorem{remark}{Remark}

\newcommand{\R}{\mathbb{R}}
\newcommand{\E}{\mathbb{E}}
\newcommand{\KL}{\mathrm{KL}}
\newcommand{\dom}{\operatorname{dom}}
\newcommand{\ran}{\operatorname{ran}}
\newcommand{\ELBO}{\mathrm{ELBO}}
\newcommand{\inner}[2]{\langle #1,\, #2 \rangle}

\title{A Tutorial on Bregman Projection in Statistics}
\author{Gunhee Cho \and Jae Kwang Kim \and Yumou Qiu }
\date{}

\begin{document}
\maketitle

\begin{abstract}
A single geometric operation --- projecting a reference onto a
constrained family under a Bregman divergence --- underlies a striking
range of statistical methods. This tutorial develops the operation first
as pure convex geometry, with no statistics attached. A strictly convex
generator $G$ and its conjugate $F$ furnish two coordinate systems, a
projection theorem with existence and uniqueness, and a Pythagorean
\revised{theorem}; the Pythagorean theorem itself produces \emph{two} dual
projections --- the information (e-) projection onto moment-constrained
families and the moment (m-) projection onto exponential families ---
exchanged by the conjugacy $G\leftrightarrow F$, so a single theorem
governs both. Part~II reads off the statistics. The generalized linear
model is treated in detail as the concrete carrier of the two
projections: \revised{under the canonical link,} the score equation is exactly the Pythagorean
orthogonality, and the fit is simultaneously an e-projection in the
natural coordinate and an m-projection in the mean coordinate. Maximum
entropy, survey calibration, over-identified moment models, the EM
algorithm, variational inference,
autoencoders, and expectation propagation then fall into place as
instances of the same construction --- exactly where the underlying
families are flat, and as controlled approximations or
neighboring-divergence analogies where they are not. The mathematics of Part~I is self-contained; the statistical
sections presume only familiarity with the methods being unified.
\end{abstract}

\tableofcontents

\part{Bregman projection}
\label{part:math}

\section{Introduction}

Fix a strictly convex function $G$ and use it to measure how far one
point lies from another by the amount $G$ rises above its own tangent
line --- the \emph{Bregman divergence} $D_G$. Given a reference point and
a constraint set, the point of the set closest to the reference in $D_G$
is the \emph{Bregman projection} of the reference onto the set. This
single construction, together with one Pythagorean identity, is the
entire content of Part~\ref{part:math}; Part~\ref{part:stat} shows that a
large fraction of statistical estimation is this construction in
disguise.

We develop the geometry with no statistics attached, because the cleanest
way to see why so many methods coincide is to first see the object they
share. Two features of the projection do all the later work. First, the
projected point has a closed form --- a generalized exponential family
generated by $G$. Second, it satisfies a Pythagorean decomposition, and
the Pythagorean theorem itself produces \emph{two} projections, dual to
one another, according to whether one varies the first or the second
argument of the divergence: the \emph{e-projection} onto a
moment-constrained ($m$-flat) family and the \emph{m-projection} onto an
exponential ($e$-flat) family. The two are exchanged by the conjugacy
$G\leftrightarrow F$, so one theorem covers both directions across all
generators \cite{Csiszar1975,AmariNagaoka,Zhang2004}.

Only once these are in hand do we turn to statistics
(Part~\ref{part:stat}). There we lead with the generalized linear model,
because it is the setting in which the two projections are most concrete
and most familiar: the negative log-likelihood is a Bregman divergence to
the saturated model, the score equation is the Pythagorean orthogonality,
and the maximum-likelihood fit is an e-projection in the natural
coordinate and, simultaneously, an m-projection in the mean coordinate.
With the two projections thus made tangible, maximum entropy, calibration,
EM, variational inference, autoencoders, and expectation propagation
follow as instances of the same construction --- exact projections where
the families are flat, controlled approximations or
neighboring-divergence analogies where they are not.

\section{Bregman divergence and dual coordinates}
\label{sec:prelim}

\subsection{The separable Bregman divergence}
Let $(\Xi,\mathcal{A},\mu)$ be a $\sigma$-finite measure space and let
$G:I\to\R$ be a {strictly convex} \emph{generator} on an open interval
$I=\dom G\subseteq\R$. The pointwise Bregman distance is the gap between
$G$ and its tangent at $t$,
\begin{equation}
	\label{eq:pointwise}
	d_G(s,t)=G(s)-G(t)-G'(t)(s-t)\ \ge 0,
\end{equation}
nonnegative by convexity and zero only at $s=t$ by strict convexity. For
measurable maps $p,q:\Xi\to I$ the \emph{separable} Bregman divergence
sums these pointwise,
\begin{equation}
	\label{eq:sepbregman}
	D_G(p\,\|\,q)=\int_\Xi d_G\big(p(\xi),q(\xi)\big)\,d\mu(\xi).
\end{equation}
The associated generalized entropy is
\[
H_G(p)=-\int_\Xi G(p)\,d\mu,
\]
up to an affine term. For the Shannon generator $G(t)=t\log t$ this is
the entropy
\[
H(p)=-\int_\Xi p\log p\,d\mu=-\E_p[\log p],
\]
written $H(\cdot)$. The divergence is convex in its first argument,
generally asymmetric, and need not obey a triangle inequality.

\begin{example}[Three generators]
	\label{ex:gen}
	$G(t)=\tfrac12 t^2$ gives $d_G(s,t)=\tfrac12(s-t)^2$, squared distance.
	$G(t)=t\log t$ gives
	\[
	d_G(s,t)=s\log\frac{s}{t}-s+t,
	\]
	which {integrates to the Kullback--Leibler divergence when
		$p$ and $q$ are probability distributions, since the linear terms integrate
		to zero}. The power/Tsallis generator
	\[
	G(t)=\frac{t^\alpha-t}{\alpha(\alpha-1)}
	\]
	{for $t>0$ and $\alpha\neq 0,1$} interpolates between common
	entropy geometries.
\end{example}

\begin{remark}[Separable vs.\ mean-parameter Bregman]
	\label{rem:separable}
	The object lifted to the infinite-dimensional, distributional space is the
	\emph{separable} divergence \eqref{eq:sepbregman} --- the U-divergence
	family of \cite{Eguchi} --- not the finite-dimensional Bregman divergence
	on a mean-parameter manifold. Every statement of Part~\ref{part:math} is
	at the separable level. {This distinction matters because a
		single generator may govern the separable projection geometry without
		automatically giving the same finite-dimensional mean-parameter geometry
		in every statistical model.}
\end{remark}

\subsection{The conjugate and two coordinate systems}
Assume throughout that $G\in C^2(I)$ is strictly convex {and
	satisfies $G''>0$}. Then $G'$ is a strictly increasing
homeomorphism from $I$ onto an open interval
\[
I^*:=G'(I),
\]
and the convex conjugate
\[
F(u)=\sup_{s\in I}\{us-G(s)\}
\]
is $C^1$ and strictly convex on $I^*$. The pair $(G,F)$ satisfies the
Fenchel--Young equality
\[
G(s)+F(u)=us
\]
exactly when $u=G'(s)$, and the two derivatives are mutually inverse:
\begin{equation}
	\label{eq:links}
	\theta=\nabla G(p)=G'(p)\quad(\text{\emph{link}}),
	\qquad
	p=\nabla F(\theta)=(G')^{-1}(\theta)\quad(\text{\emph{inverse link}}).
\end{equation}
We call $p$ the \emph{m-coordinate} --- the point itself, or mixture /
expectation coordinate --- and $\theta=\nabla G(p)$ the
\emph{e-coordinate} --- the natural or exponential coordinate.
{These names are used in the generalized Bregman sense; for
	regular exponential families they coincide with the usual mean and natural
	coordinates.} The link translates between the two coordinate systems and,
as the next lemma shows, conjugates the divergence.

\subsection{Two flat structures and two identities}

A family is \emph{m-flat} if it is affine in the m-coordinate. Its
prototype is a \emph{moment set}
\begin{equation}
	\mathcal{M}(c)
	=
	\Big\{
	p:\Xi\to I:
	\E_p[T]:=\int_\Xi p(\xi)T(\xi)\,d\mu(\xi)=c
	\Big\}, 
\label{eq:m_c}
\end{equation}
where $T:\Xi\to\R^k$ is a statistic and $c\in\R^k$ is the target moment.
{When the target is fixed and unambiguous we sometimes write
	$\mathcal{M}$, but $\mathcal{M}(c)$ is the preferred notation whenever
	several moment sets appear.}

A family is \emph{e-flat} if it is affine in the e-coordinate
$\theta=\nabla G(p)$. Its prototype is the \emph{$G$-exponential family}
through a reference point $p_0$:
\[
{
	\mathcal{E}_G(p_0,T)
	=
	\Big\{
	\nabla F\big(\nabla G(p_0)+\inner{\lambda}{T}\big):
	\lambda\in\R^k,\ 
	\nabla G(p_0)+\inner{\lambda}{T}\in I^*
	\ \mu\text{-a.e.}
	\Big\}.
}
\]
{When no confusion is possible this family will be denoted
	simply by $\mathcal{E}$, but the expanded notation
	$\mathcal{E}_G(p_0,T)$ records its dependence on the generator, reference,
	and statistics.} The link $\nabla G$ sends primal affine descriptions to
dual affine descriptions after passing to the conjugate generator $F$.
The $G$-exponential family above is a regular exponential family in the
natural coordinate $\theta$ {in the regular finite-dimensional
	setting}. The correspondence between regular exponential families and
regular Bregman divergences is exact at the mean-parameter level: the
cumulant's conjugate serves as the generator \cite{Banerjee2005}.
{This is the finite-dimensional counterpart of, but not
	identical to, the separable divergence of Remark~\ref{rem:separable}.}
The statistical sections specialize this dictionary.

\begin{lemma}[Dual-divergence identity]
	\label{lem:dualdiv}
	For measurable $p,q:\Xi\to I$,
	\begin{equation}
		\label{eq:dualdiv}
		D_G(p\,\|\,q)=D_F\big(\nabla G(q)\,\|\,\nabla G(p)\big).
	\end{equation}
\end{lemma}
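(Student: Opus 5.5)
The plan is to reduce \eqref{eq:dualdiv} to a pointwise identity between the integrands and then integrate against $\mu$. Since both $D_G$ and $D_F$ are defined in \eqref{eq:sepbregman} by integrating the pointwise distance \eqref{eq:pointwise}, it suffices to show that for every $s,t\in I$, with $u=G'(s)$ and $v=G'(t)$ (both in $I^*$),
\[
d_G(s,t)=d_F(v,u).
\]
Here $d_F$ is taken in the same sense as \eqref{eq:pointwise} with generator $F$ on $I^*$. We have $F\in C^1$ and strictly convex there, and in fact $F\in C^2$ with $F''=1/G''(F')>0$ by the inverse function theorem. So $d_F$ is well defined and nonnegative.

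For the pointwise identity, expand $d_F(v,u)=F(v)-F(u)-F'(u)(v-u)$. By \eqref{eq:links}, $F'(u)=(G')^{-1}(u)=s$. By the Fenchel--Young equality, $F(u)=us-G(s)$ and $F(v)=vt-G(t)$. Substituting these gives
\[
d_F(v,u)=vt-G(t)-us+G(s)-s(v-u)=G(s)-G(t)-v(s-t),
\]
and since $v=G'(t)$ this is exactly $d_G(s,t)$. The identity therefore reduces to three uses of the conjugacy facts already recorded before the lemma, with no further calculation needed.

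To finish, apply the pointwise identity with $s=p(\xi)$ and $t=q(\xi)$ for each $\xi$. The maps $\nabla G(p)$ and $\nabla G(q)$ are measurable, being continuous functions of measurable maps, and take values in $I^*$. Both integrands are nonnegative and agree pointwise, so the two integrals coincide in $[0,\infty]$, including the case where both are infinite.

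The only point needing care is the order of arguments. The reference $q$ on the left becomes the first argument of $D_F$ on the right, and this is where the tangent point switches from $t$ to $u$. I would double-check this by the quadratic case $G(t)=\tfrac12t^2$, where $F=G$ and the identity reduces to symmetry of $\tfrac12(s-t)^2$. Beyond that bookkeeping I expect no real obstacle.
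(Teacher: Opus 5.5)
Your proposal is correct and is essentially the paper's proof: reduce to the pointwise identity $d_G(s,t)=d_F\big(G'(t),G'(s)\big)$ using Fenchel--Young and $F'=(G')^{-1}$, then integrate over $\Xi$. Your remarks that both integrands are measurable and nonnegative, so the integrals agree in $[0,\infty]$, make explicit a step the paper's integration leaves implicit.
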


\begin{proof}
	It suffices to prove the pointwise identity
	\[
	d_G(s,t)=d_F\big(G'(t),G'(s)\big).
	\]
	Using $F(G'(r))=G'(r)r-G(r)$ and $F'(G'(s))=s$,
	\[
	\begin{aligned}
		d_F\big(G'(t),G'(s)\big)
		&=
		\big[G'(t)t-G(t)\big]
		-
		\big[G'(s)s-G(s)\big]
		-
		s\big(G'(t)-G'(s)\big) \\
		&=
		G(s)-G(t)-G'(t)(s-t)
		=
		d_G(s,t).
	\end{aligned}
	\]
	Integrating over $\Xi$ gives \eqref{eq:dualdiv}.
\end{proof}

Reading a $G$-divergence in e-coordinates turns it into an $F$-divergence
with the arguments swapped; this single fact makes the two projections of
\S\ref{sec:projections} conjugate.

\begin{lemma}[Three-point identity]
	\label{lem:threepoint}
	For all $a,b,c\in I$,
	\begin{equation}
		\label{eq:threepoint}
		d_G(a,b)
		=
		d_G(a,c)+d_G(c,b)+\big(G'(c)-G'(b)\big)(a-c).
	\end{equation}
\end{lemma}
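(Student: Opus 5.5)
The plan is to verify \eqref{eq:threepoint} by direct expansion from the definition \eqref{eq:pointwise}. No convexity, conjugacy, or Lemma~\ref{lem:dualdiv} is needed: the identity is purely algebraic in the values $G(a),G(b),G(c)$ and the slopes $G'(b),G'(c)$. Strict convexity and $G\in C^2$ matter only for the interpretation of each term as a nonnegative divergence, not for the equality itself.

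First, write out the two terms on the right that are divergences:
\[
d_G(a,c)=G(a)-G(c)-G'(c)(a-c),\qquad d_G(c,b)=G(c)-G(b)-G'(b)(c-b).
\]
Second, add them. The $G(c)$ terms cancel, which leaves
\[
G(a)-G(b)-G'(c)(a-c)-G'(b)(c-b).
\]
Third, add the cross term $\big(G'(c)-G'(b)\big)(a-c)$. Its $G'(c)(a-c)$ part cancels the corresponding term above. What remains of the slope terms is
\[
-G'(b)(a-c)-G'(b)(c-b)=-G'(b)(a-b).
\]
The total is therefore $G(a)-G(b)-G'(b)(a-b)=d_G(a,b)$, which is the left side.

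There is no real obstacle here. The only point requiring care is bookkeeping of signs in the cross term, and in particular confirming that the paper's convention is $(G'(c)-G'(b))(a-c)$ rather than $(G'(b)-G'(c))(a-c)$. The expansion above settles this. Finally, I would record that integrating the pointwise identity over $\Xi$ with $a=p(\xi)$, $b=q(\xi)$, $c=r(\xi)$ gives the separable version
\[
D_G(p\,\|\,q)=D_G(p\,\|\,r)+D_G(r\,\|\,q)+\int_\Xi\big(\nabla G(r)-\nabla G(q)\big)(p-r)\,d\mu .
\]
This holds whenever the integrals are finite, and it is the form that the Pythagorean theorem later uses.
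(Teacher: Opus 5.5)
Your proposal is correct and matches the paper's proof. Both expand the right-hand side from \eqref{eq:pointwise}, cancel the $G(c)$ and $G'(c)(a-c)$ terms, and recombine the $G'(b)$ terms into $-G'(b)(a-b)$; your integrated form is the version the paper applies in the proof of Theorem~\ref{thm:main}.
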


\begin{proof}
	Expand the right-hand side. The terms involving $G'(b)$ recombine into
	$-G'(b)(a-b)$, leaving
	\[
	G(a)-G(b)-G'(b)(a-b)=d_G(a,b).
	\]
\end{proof}

The cross term in \eqref{eq:threepoint} is precisely what must vanish for
a Pythagorean decomposition. The projection of the next section is the
choice of the intermediate point $c$ that kills it.

\subsection{A minimal information-geometric reading}

	The preceding construction can be read as a minimal piece of information
	geometry \cite{AmariNagaoka}. The point is not to introduce additional
	differential-geometric machinery, but to record the geometric meaning of
	the objects already defined above. A strictly convex generator $G$
	provides more than a divergence: it determines a local quadratic geometry
	and a dual way of measuring displacement.

	In finite dimension, suppose that $\Omega\subset\R^d$ is open and convex,
	and that $G\in C^2(\Omega)$ has positive definite Hessian. Then the
	second-order expansion of the Bregman divergence at the diagonal gives
	\[
	D_G(p+h\|p)
	=
	\frac12 h^\top \nabla^2G(p)h
	+
	O(\|h\|^3).
	\]
	Thus, although $D_G$ is generally asymmetric and is not a metric distance,
	its infinitesimal second-order part defines the Hessian metric
	\[
	g_p(u,v)
	=
	u^\top\nabla^2G(p)v.
	\]
	This is the local quadratic geometry induced by $G$.

	For the separable divergence used in this paper, the same calculation
	gives
	\[
	g_p(u,v)
	=
	\int_\Xi G''(p(\xi))u(\xi)v(\xi)\,d\mu(\xi),
	\]
	for tangent perturbations $u,v$ for which the integral is finite and
	$p+\varepsilon u$, $p+\varepsilon v$ remain in $I$ for sufficiently small
	$\varepsilon$. Hence the curvature $G''$ determines how local
	perturbations are weighted. In the Shannon case $G(t)=t\log t$, this
	becomes
	\[
	g_p(u,v)
	=
	\int_\Xi
	\frac{u(\xi)v(\xi)}{p(\xi)}\,d\mu(\xi),
	\]
	the Fisher quadratic form on positive densities. On normalized densities,
	the tangent vectors additionally satisfy
	\[
	\int_\Xi u\,d\mu
	=
	\int_\Xi v\,d\mu
	=
	0.
	\]

{
	The dual coordinate also has a local metric interpretation. Since
	\[
	\theta=\nabla G(p),
	\qquad
	p=\nabla F(\theta),
	\]
	one has, whenever the Hessians are invertible,
	\[
	\nabla^2F(\theta)
	=
	\bigl(\nabla^2G(p)\bigr)^{-1}.
	\]
	Thus the primal and dual descriptions carry inverse Hessian quadratic
	forms. This is the infinitesimal version of the conjugacy already
	expressed globally by the dual-divergence identity.
}

	The Pythagorean identity can now be read as a global version of this local
	orthogonality. At a projection point $p^\ast$, the dual displacement
	$\nabla G(p^\ast)-\nabla G(p_0)$ lies in the span of the constraint
	functions, while feasible perturbations $p-p^\ast$ preserve the imposed
	moments. Therefore the cross term in the three-point identity vanishes:
	\[
	\int_\Xi
	\bigl(
	G'(p^\ast(\xi))-G'(p_0(\xi))
	\bigr)
	\bigl(
	p(\xi)-p^\ast(\xi)
	\bigr)
	\,d\mu(\xi)
	=
	0.
	\]
	Thus Bregman projection is governed by orthogonality in the dual pairing
	induced by $G$, not necessarily by ordinary Euclidean orthogonality.

	In summary, the information-geometric content needed below is the
	following: the generator $G$ defines a divergence, its Hessian defines the
	local quadratic geometry, the map $\nabla G$ provides the dual coordinate,
	and the Pythagorean theorem expresses projection as orthogonality in this
	dual pairing. The later statistical examples use precisely these
	ingredients: dual coordinates, projection, and the Pythagorean identity.

\section{The projection theorem and the two projections}
\label{sec:projections}

We project a reference $p_0:\Xi\to I$ onto the moment set
{$\mathcal{M}(c)$} in (\ref{eq:m_c}) by minimizing the divergence in its
\emph{first} argument:
\begin{equation}
	\label{eq:primal}
	\min_{p\in{\mathcal{M}(c)}}\ D_G(p\,\|\,p_0) .
\end{equation}
A normalization $\int p\,d\mu=m$ is the constraint attached to
$T_0\equiv1$, $c_0=m$, and is absorbed into $T$.

\subsection*{Standing assumptions}
\begin{description}
	\item[(A1)] {$G\in C^2(I)$ is strictly convex and $G''>0$} on the open interval $I$,
	so $\nabla F=(G')^{-1}$ exists on $I^*=\ran G'$.
	
	\item[(A2)] The feasible set
	\[
	{
		\mathcal{M}(c)
		=
		\{p:\Xi\to I:\E_p[T]=c,\ D_G(p\,\|\,p_0)<\infty\}
	}
	\]
	is nonempty. {It is convex because the moment constraint is affine and
		$I$ is an open interval.}
	
	\item[(A3)] \emph{Constraint qualification.} Writing the effective dual
	domain
	\[
	\mathcal{D}
	=
	\Big\{
	\lambda\in\R^k:
	\nabla G(p_0)+\inner{\lambda}{T}\in I^*\ \mu\text{-a.e.},\
	\textstyle\int_\Xi
	F\big(\nabla G(p_0)+\inner{\lambda}{T}\big)\,d\mu<\infty
	\Big\},
	\]
	the dual objective {$\Phi$} of \eqref{eq:dual} attains its supremum at a point
	$\lambda^*\in\operatorname{ri}(\mathcal{D})$ modulo $\mathcal{N}$.
	A sufficient condition is that {$\Phi$ is coercive modulo
		$\mathcal{N}$, as happens under the usual steepness and integrability
		assumptions.} \revised{We assume in addition that $0\in\mathcal{D}$ ---
	membership at $\lambda=0$ is automatic, so this says
	$\int_\Xi F\big(\nabla G(p_0)\big)\,d\mu<\infty$ --- which makes the
	constant $C_0$ in \eqref{eq:dual} finite and the dual objective well
	defined.}

\item[(A4)] \emph{Integrability.} Differentiation under the integral sign
is valid \revised{on the relative interior of $\mathcal{D}$, so that
$\nabla \Phi(\lambda)=c-\E_{p^{*}_\lambda}[T]$ holds for every
$\lambda\in\operatorname{ri}(\mathcal{D})$ --- in particular, by (A3), in
a neighborhood of $\lambda^{*}$.}
  
\end{description}

\begin{remark}[Where (A3) and (A4) bite]
\label{rem:a4}
For the Shannon generator, $I^{*}=\R$, so the membership condition
\revised{in the definition of $\mathcal{D}$ in (A3)} holds automatically
and the condition reduces to integrability of
$p_0\,e^{\inner{\lambda}{T}}$\revised{; differentiation under the
integral (A4) then holds on $\operatorname{ri}(\mathcal{D})$ by the
classical differentiability of log-partition-type integrals}. For
power/Tsallis generators, $I^{*}$ is a half-line, so the membership is a
genuine positivity restriction, and with continuous $\xi$ normalizability
can fail under heavy tails. \revised{(A3) and (A4) are hypotheses}
carried, not discharged for arbitrary $G$; \revised{they are} exactly
what \revised{make} the infinite-dimensional projection well-posed. For
the Shannon case with $G(t)=t\log t$, we have $G'(t)=1+\log t$ and
$\nabla F(u)=e^{u-1}$; hence $p_\lambda=p_0\,e^{\langle\lambda,T\rangle}$.
\end{remark}

\subsection{The dual function}

\begin{lemma}[Pointwise minimizer and dual function]
	\label{lem:pointwise}
	The Lagrangian
	\[
	L(p,\lambda)
	=
	D_G(p\,\|\,p_0)-\inner{\lambda}{\E_p[T]-c}
	\]
	is, for fixed $\lambda$, minimized over $p$ by
	\begin{equation}
		\label{eq:form}
		p^*_\lambda(\xi)
		=
		\nabla F\big(
		\nabla G(p_0(\xi))+\inner{\lambda}{T(\xi)}
		\big),
	\end{equation}
	whenever the argument lies in $I^*$ a.e. The corresponding concave dual
	function is
	\begin{equation}
		\label{eq:dual}
		{ 
			\Phi(\lambda)
			=
			\inner{\lambda}{c}
			-
			\int_\Xi
			F\big(\nabla G(p_0)+\inner{\lambda}{T}\big)\,d\mu
			+
			C_0,
			\qquad
			C_0=
			\int_\Xi F\big(\nabla G(p_0)\big)\,d\mu .
		}
	\end{equation}
	{Here $\Phi(\lambda)$ is finite only on the effective dual
		domain $\mathcal{D}$; outside $\mathcal{D}$ it may be read as $-\infty$.}
\end{lemma}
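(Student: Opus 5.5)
The plan is to reduce the minimization of the Lagrangian to a pointwise scalar problem and then evaluate the minimum value with the Fenchel--Young equality. Write $u_0=G'(p_0(\xi))$ and $\tau=T(\xi)$. Expanding $d_G$, the integrand of $L(p,\lambda)$ at each $\xi$ is
\[
\ell_\xi(s)=G(s)-G(p_0)-u_0(s-p_0)-\inner{\lambda}{\tau}\,s ,
\]
and the constant $\inner{\lambda}{c}$ is added after integration. Since $L=\int_\Xi \ell_\xi(p(\xi))\,d\mu+\inner{\lambda}{c}$, minimizing over measurable $p$ can be done pointwise. If $s^*(\xi)$ minimizes $\ell_\xi$ for every $\xi$ and is measurable, then $L(p,\lambda)\ge L(s^*,\lambda)$ for every admissible $p$.

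For the scalar problem, $\ell_\xi$ is strictly convex and $C^1$ on $I$ by (A1), with derivative $G'(s)-u_0-\inner{\lambda}{\tau}$. If $u_0+\inner{\lambda}{\tau}\in I^*=\ran G'$, then the derivative has a unique zero at $s^*=(G')^{-1}(u_0+\inner{\lambda}{\tau})=\nabla F(u_0+\inner{\lambda}{\tau})$. By strict convexity this stationary point is the unique global minimizer, which is exactly \eqref{eq:form}. Measurability of $p^*_\lambda$ follows because $\nabla F$ is continuous and $p_0$ and $T$ are measurable. Outside $I^*$ the infimum is not attained, which is why the statement carries the a.e.\ proviso.

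To obtain $\Phi$, I substitute $s^*$ and use Fenchel--Young in the form $G(s^*)=u^*s^*-F(u^*)$ with $u^*=u_0+\inner{\lambda}{\tau}$. This gives
\[
\ell_\xi(s^*)=u^*s^*-F(u^*)-G(p_0)+u_0p_0-u_0s^*-\inner{\lambda}{\tau}s^*=-F(u^*)+\big[u_0p_0-G(p_0)\big].
\]
The bracket equals $F(u_0)=F(\nabla G(p_0))$, again by Fenchel--Young. Integrating and adding $\inner{\lambda}{c}$ yields \eqref{eq:dual}, with $C_0$ finite by (A3).

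Concavity of $\Phi$ follows from three facts. The map $\lambda\mapsto F(u_0+\inner{\lambda}{\tau})$ is a convex function composed with an affine map, so it is convex pointwise. Integration preserves convexity on $\mathcal{D}$, and setting $\Phi=-\infty$ off $\mathcal{D}$ keeps it concave. Alternatively, $\Phi$ is an infimum of functions affine in $\lambda$.

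The main obstacle is the integral bookkeeping rather than the calculus. The split into $\int G(p)$, $\int G(p_0)$ and the linear terms is only legitimate when each piece is finite. I would handle this by never separating the terms. Instead I would integrate the pointwise inequality $\ell_\xi(p(\xi))\ge \ell_\xi(s^*(\xi))=F(u_0)-F(u^*)$. Its right-hand side is integrable precisely when $\lambda\in\mathcal{D}$ and $0\in\mathcal{D}$. This bounds $L(p,\lambda)$ below for every $p$ with $D_G(p\|p_0)<\infty$ and $\E_p[T]$ finite, and the bound is attained at $p^*_\lambda$.
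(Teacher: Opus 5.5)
Your proposal is correct and follows the paper's own proof essentially step for step: decouple the Lagrangian across $\xi$, solve the strictly convex scalar first-order condition $G'(s)=G'(p_0(\xi))+\inner{\lambda}{T(\xi)}$, and use Fenchel--Young so that the $s^*$ terms cancel, leaving $F(\nabla G(p_0))-F(u^*)$ to integrate. Your additions --- measurability, non-attainment off $I^*$, concavity (which the paper proves in the next lemma), and integrating the pointwise lower bound instead of splitting the terms --- are sound refinements; one small caveat is that $0,\lambda\in\mathcal{D}$ is sufficient, but not necessary, for $F(u_0)-F(u^*)$ to be integrable, so ``precisely when'' should read ``if''.
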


\begin{proof}
	The Lagrangian decouples across $\xi$ into strictly convex scalar
	problems. {At a fixed $\xi$, write $s=p(\xi)$ and evaluate
		$p_0$ and $T$ at that same point. Then
		\[
		\ell_\xi(s)
		=
		G(s)-G(p_0(\xi))
		-
		G'(p_0(\xi))(s-p_0(\xi))
		-
		\inner{\lambda}{T(\xi)}s.
		\]
		The first-order condition
		\[
		\ell_\xi'(s)
		=
		G'(s)-G'(p_0(\xi))-\inner{\lambda}{T(\xi)}
		=0
		\]
		gives \eqref{eq:form}. With
		\[
		u^*(\xi)
		=
		G'(p^*_\lambda(\xi))
		=
		\nabla G(p_0(\xi))+\inner{\lambda}{T(\xi)},
		\]
		Fenchel--Young gives
		\[
		G(p^*_\lambda(\xi))
		=
		u^*(\xi)p^*_\lambda(\xi)-F(u^*(\xi)).
		\]
		Therefore the coefficient of $p^*_\lambda(\xi)$ in
		$\ell_\xi(p^*_\lambda(\xi))$ vanishes, leaving
		\[
		\ell_\xi(p^*_\lambda(\xi))
		=
		F(\nabla G(p_0(\xi)))-F(u^*(\xi)).
		\]
		Integrating over $\xi$ and adding $\inner{\lambda}{c}$ yields
		\eqref{eq:dual}.}
\end{proof}

\begin{lemma}[Concavity, gradient, weak duality]
\label{lem:dual}
$\Phi$ is concave\revised{, with
$\nabla \Phi(\lambda)=c-\E_{p^{*}_\lambda}[T]$ on
$\operatorname{ri}(\mathcal{D})$, and} strictly concave \revised{on
$\mathcal{D}$} modulo
$\mathcal{N}=\{a\in\R^{k}:\inner{a}{T}=0\ \mu\text{-a.e.}\}$, and
$\Phi(\lambda)\le D_G(p\,\|\,p_0)$ for every $p\in\mathcal{M}(c)$ and
every $\lambda$.
\end{lemma}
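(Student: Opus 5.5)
I will treat the four claims separately, all resting on the explicit form \eqref{eq:dual} of $\Phi$ from Lemma~\ref{lem:pointwise}.

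\emph{Concavity.} Write $\Phi(\lambda)=\inner{\lambda}{c}-\Psi(\lambda)+C_0$, where
\[
\Psi(\lambda)=\int_\Xi F\big(\nabla G(p_0)+\inner{\lambda}{T}\big)\,d\mu .
\]
For each $\xi$, the map $\lambda\mapsto F(\nabla G(p_0(\xi))+\inner{\lambda}{T(\xi)})$ is a convex function $F$ composed with an affine map, hence convex on $\mathcal{D}$. Integration preserves convexity, so $\Psi$ is convex, and $\Phi$ is concave. Since $I^*$ is an interval, $\mathcal{D}$ is convex. Setting $\Phi=-\infty$ off $\mathcal{D}$ keeps it concave.

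\emph{Gradient.} On $\operatorname{ri}(\mathcal{D})$, (A4) lets me differentiate under the integral:
\[
\nabla\Psi(\lambda)=\int_\Xi F'\big(\nabla G(p_0)+\inner{\lambda}{T}\big)\,T\,d\mu=\int_\Xi p^*_\lambda T\,d\mu=\E_{p^*_\lambda}[T],
\]
using $F'=\nabla F$ and \eqref{eq:form}. Hence $\nabla\Phi(\lambda)=c-\E_{p^*_\lambda}[T]$.

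\emph{Strict concavity modulo $\mathcal{N}$.} I will show that $\Phi$ is strictly concave along every segment in $\mathcal{D}$ whose direction $a=\lambda_1-\lambda_0$ does not lie in $\mathcal{N}$. Then $\inner{a}{T}\neq 0$ on a set of positive $\mu$-measure. On that set the affine arguments $u_0=\nabla G(p_0)+\inner{\lambda_0}{T}$ and $u_1=\nabla G(p_0)+\inner{\lambda_1}{T}$ differ. $F$ is strictly convex on $I^*$ because $F'=(G')^{-1}$ is strictly increasing. So the midpoint (or any convex-combination) inequality for $F$ is strict on a positive-measure set. Integrating gives a strict inequality for $\Psi$; finiteness holds because both endpoints lie in $\mathcal{D}$.

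In the other direction, if $a\in\mathcal{N}$, the integrand is unchanged $\mu$-a.e., so $\Psi(\lambda+a)=\Psi(\lambda)$. Also, for $p\in\mathcal{M}(c)$ we have $\inner{a}{c}=\E_p[\inner{a}{T}]=0$. So $\Phi$ is constant along $\mathcal{N}$, which is the sense of ``modulo $\mathcal{N}$''. The one subtlety is the case where $\mathcal{M}(c)$ is empty and $\inner{a}{c}\neq 0$; I exclude it using (A2).

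\emph{Weak duality.} Fix $p\in\mathcal{M}(c)$ and $\lambda\in\mathcal{D}$; if $\lambda\notin\mathcal{D}$ the bound is trivial since $\Phi=-\infty$. Because $\E_p[T]=c$, we have $L(p,\lambda)=D_G(p\,\|\,p_0)$. Lemma~\ref{lem:pointwise} then gives
\[
L(p,\lambda)\ge L(p^*_\lambda,\lambda)=\Phi(\lambda).
\]
The pointwise inequality $\ell_\xi(p(\xi))\ge\ell_\xi(p^*_\lambda(\xi))$ holds because $\ell_\xi$ is strictly convex with stationary point $p^*_\lambda(\xi)$.

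\emph{Main obstacle.} The difficulty is integrating this pointwise inequality. I must split $\int\ell_\xi(p)\,d\mu$ into $D_G(p\,\|\,p_0)-\inner{\lambda}{\E_p[T]}$ without an $\infty-\infty$ issue. This requires $\E_p[T]$ to be finite, which holds since $p\in\mathcal{M}(c)$. I also need $D_G(p\,\|\,p_0)<\infty$, which (A2) builds into the feasible set. The right-hand side, $\int(F(\nabla G(p_0))-F(u^*))\,d\mu$, is finite because both $0$ and $\lambda$ lie in $\mathcal{D}$; this is where the added clause of (A3) is used. With these integrability checks, the pointwise inequality integrates to $D_G(p\,\|\,p_0)\ge\Phi(\lambda)$.
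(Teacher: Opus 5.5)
Your proof is correct and follows the paper's argument essentially step for step. Concavity comes from the convexity of $F$ composed with an affine map. The gradient comes from differentiating under the integral via (A4). Strict concavity comes from strict convexity of $F$ on the positive-measure set where $\inner{a}{T}\neq 0$, together with constancy of $\Phi$ along $\mathcal{N}$ using a feasible point from (A2). Weak duality comes from $L(p,\lambda)=D_G(p\,\|\,p_0)$ on $\mathcal{M}(c)$ and the pointwise minimality of $p^*_\lambda$.

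Your extra integrability bookkeeping when integrating the pointwise inequality is a refinement of the paper's one-line $\inf_{p'}L(p',\lambda)=\Phi(\lambda)$, not a different route. It checks finiteness of $\E_p[T]$, of $D_G(p\,\|\,p_0)$, and of the right-hand side via $0,\lambda\in\mathcal{D}$.
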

\begin{proof}
Concavity follows because $F$ is convex and its argument is affine in
$\lambda$, so $-F(\cdots)$ is concave\revised{; with the convention
$\Phi=-\infty$ off $\mathcal{D}$, this gives concavity on all of
$\R^{k}$, $\mathcal{D}$ being convex}. Differentiating under the
integral, \revised{valid by (A4) on $\operatorname{ri}(\mathcal{D})$},
gives $\nabla_\lambda F(\cdots)=\nabla F(\cdots)\,T=p^{*}_\lambda T$,
hence $\nabla \Phi(\lambda)=c-\E_{p^{*}_\lambda}[T]$ \revised{there}.

\revised{Strictness off $\mathcal{N}$ is asserted on $\mathcal{D}$.
First, $\Phi$ is constant along $\mathcal{N}$: for $a\in\mathcal{N}$ the
integrand is unchanged, and $\inner{a}{c}=\E_p[\inner{a}{T}]=0$ for any
feasible $p$, which exists by (A2); hence $\Phi$ descends to
$\mathcal{D}/\mathcal{N}$. Now let
$\lambda_1\ne\lambda_2\in\mathcal{D}$ with
$\lambda_1-\lambda_2\notin\mathcal{N}$. The affine arguments
$\nabla G(p_0)+\inner{\lambda_1}{T}$ and
$\nabla G(p_0)+\inner{\lambda_2}{T}$ then differ on a set of positive
$\mu$-measure, on which $F$ --- strictly convex on $I^{*}$, since its
derivative $(G')^{-1}$ is strictly increasing --- satisfies the midpoint
inequality strictly; elsewhere it holds weakly. Integrating,
$\lambda\mapsto\int_\Xi F(\cdots)\,d\mu$ is strictly convex along the
segment $[\lambda_2,\lambda_1]\subset\mathcal{D}$, so $\Phi$ is strictly
concave on $\mathcal{D}$ modulo $\mathcal{N}$.}

For weak duality, \revised{if $\lambda\notin\mathcal{D}$ the bound is
trivial, since $\Phi(\lambda)=-\infty$; for $\lambda\in\mathcal{D}$,} if
$p$ is feasible, then $\E_p[T]-c=0$, so
$D_G(p\,\|\,p_0)=L(p,\lambda)\ge\inf_{p'}L(p',\lambda)=\Phi(\lambda)$,
\revised{the last equality by Lemma~\ref{lem:pointwise}}.
\end{proof}

\subsection{The Pythagorean theorem}

\begin{theorem}[e-projection: form, existence, uniqueness, Pythagoras]
	\label{thm:main}
	Under (A1)--(A4), problem \eqref{eq:primal} has a unique solution $p^*$
	$\mu$-a.e.,
	\begin{equation}
		\label{eq:soln}
		p^*(\xi)
		=
		\nabla F\big(
		\nabla G(p_0(\xi))+\inner{\lambda}{T(\xi)}
		\big),
	\end{equation}
	where $\lambda$ solves $\E_{p^*}[T]=c$ and is unique modulo
	$\mathcal{N}$. Thus the e-projection of $p_0$ onto the m-flat set
	{$\mathcal{M}(c)$} lands in
	{the $G$-exponential family
		$\mathcal{E}_G(p_0,T)$ of \S\ref{sec:prelim}; in the Shannon case this is
		the ordinary exponential family through $p_0$}. Moreover, for every
	$p\in{\mathcal{M}(c)}$ the \emph{Pythagorean identity} holds:
	\begin{equation}
		\label{eq:pyth}
		D_G(p\,\|\,p_0)
		=
		D_G(p\,\|\,p^*)+D_G(p^*\,\|\,p_0).
	\end{equation}
\end{theorem}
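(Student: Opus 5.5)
The plan is to run the dual argument built up in Lemmas~\ref{lem:pointwise} and \ref{lem:dual}, and then obtain the Pythagorean identity from the three-point identity of Lemma~\ref{lem:threepoint}.

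\textbf{Step 1 (stationary dual point).} By (A3) the dual objective $\Phi$ attains its supremum at some $\lambda^*\in\operatorname{ri}(\mathcal{D})$. By (A4) and Lemma~\ref{lem:dual}, $\Phi$ is differentiable there with $\nabla\Phi(\lambda^*)=c-\E_{p^*_{\lambda^*}}[T]$. An interior maximizer is stationary, so $\nabla\Phi(\lambda^*)=0$. Hence $p^*:=p^*_{\lambda^*}$ from \eqref{eq:form} satisfies $\E_{p^*}[T]=c$. Since $\lambda^*\in\mathcal{D}$, the argument of $\nabla F$ lies in $I^*$ almost everywhere, so $p^*$ takes values in $I$ and has the form \eqref{eq:soln}. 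It therefore lies in $\mathcal{E}_G(p_0,T)$.

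\textbf{Step 2 (feasibility and optimality).} I must check that $D_G(p^*\|p_0)<\infty$, so that $p^*\in\mathcal{M}(c)$. The pointwise computation in the proof of Lemma~\ref{lem:pointwise} shows
\[
d_G(p^*,p_0)=\inner{\lambda^*}{T}p^*+F(\nabla G(p_0))-F(u^*),
\]
where $u^*=\nabla G(p_0)+\inner{\lambda^*}{T}$. The last two terms are integrable because $\lambda^*\in\mathcal{D}$ and $0\in\mathcal{D}$. The first term integrates to $\inner{\lambda^*}{c}$, which is finite provided $p^*T$ is integrable; this follows from (A4), which makes $\E_{p^*}[T]$ meaningful. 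The same identity gives $D_G(p^*\|p_0)=\Phi(\lambda^*)$, so the duality gap is zero. Weak duality in Lemma~\ref{lem:dual} then gives $D_G(p\|p_0)\ge\Phi(\lambda^*)=D_G(p^*\|p_0)$ for every feasible $p$. So $p^*$ solves \eqref{eq:primal}. Because the integrand is nonnegative, this bookkeeping is the main obstacle: I will handle it by integrating the pointwise identity directly rather than splitting the integral.

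\textbf{Step 3 (Pythagoras).} Apply Lemma~\ref{lem:threepoint} pointwise with $a=p(\xi)$, $b=p_0(\xi)$, $c=p^*(\xi)$. The cross term is
\[
\big(G'(p^*)-G'(p_0)\big)(p-p^*)=\inner{\lambda^*}{T}(p-p^*).
\]
Its integral is $\inner{\lambda^*}{\E_p[T]-\E_{p^*}[T]}=\inner{\lambda^*}{c-c}=0$, which holds for every $p\in\mathcal{M}(c)$. Integrating the identity yields \eqref{eq:pyth}. There is an integrability subtlety: the three $d_G$ terms are nonnegative, and both $pT$ and $p^*T$ are integrable by the definition of $\E_p[T]$. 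So the decomposition is an identity of integrable functions, and \eqref{eq:pyth} holds in $[0,\infty]$, with all terms finite here.

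\textbf{Step 4 (uniqueness).} For uniqueness of the minimizer, let $p$ be any other minimizer. Then \eqref{eq:pyth} forces $D_G(p\|p^*)=0$. Since $d_G\ge0$ vanishes only on the diagonal, $p=p^*$ $\mu$-a.e. For uniqueness of the multiplier, suppose $\lambda'$ also gives $\E_{p^*_{\lambda'}}[T]=c$ with $\lambda'\in\mathcal{D}$. Then $p^*_{\lambda'}$ is also a minimizer by the argument above, so $p^*_{\lambda'}=p^*$ almost everywhere. Since $\nabla F$ is injective, $\inner{\lambda'-\lambda^*}{T}=0$ almost everywhere, that is, $\lambda'-\lambda^*\in\mathcal{N}$. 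Alternatively, uniqueness modulo $\mathcal{N}$ follows from the strict concavity in Lemma~\ref{lem:dual}.
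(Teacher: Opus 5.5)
Your proposal is correct and follows essentially the same route as the paper. It uses dual attainment and stationarity from (A3)--(A4), then zero duality gap plus weak duality for optimality, then the integrated three-point identity with vanishing cross term $\inner{\lambda^*}{\E_p[T]-\E_{p^*}[T]}$ for Pythagoras, and $D_G(p\,\|\,p^*)=0$ for uniqueness. Your explicit check that $D_G(p^*\,\|\,p_0)<\infty$ fills in a step the paper only asserts, and your injectivity-of-$\nabla F$ argument for uniqueness of $\lambda$ modulo $\mathcal{N}$ is a valid alternative to the paper's appeal to strict concavity of $\Phi$ (Lemma~\ref{lem:dual}).
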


\begin{proof}
	\emph{Existence.} By Lemma~\ref{lem:dual}, {$\Phi$} is
	concave. By (A3), its supremum is attained at
	$\lambda^*\in\operatorname{ri}(\mathcal{D})$, where by (A4)
	{$\Phi$} is differentiable. The first-order condition gives
	\[
	{
		\nabla\Phi(\lambda^*)=0,
	}
	\]
	hence
	\[
	\E_{p^*_{\lambda^*}}[T]=c.
	\]
	The candidate $p^*:=p^*_{\lambda^*}$ is $I$-valued with finite divergence
	because $\lambda^*\in\mathcal{D}$, hence feasible.
	
	\emph{Optimality.} Since $p^*$ is feasible and minimizes
	$L(\cdot,\lambda^*)$ pointwise,
	\[
	D_G(p^*\,\|\,p_0)
	=
	{\Phi(\lambda^*)}
	\le
	D_G(p\,\|\,p_0)
	\]
	for all $p\in{\mathcal{M}(c)}$.
	
	\emph{Pythagoras and uniqueness.} For any feasible $p$, the three-point
	identity \eqref{eq:threepoint} with $(a,b,c)=(p,p_0,p^*)$, integrated over
	$\Xi$, has cross term
	\[
	\begin{aligned}
		\int_\Xi
		\big(G'(p^*)-G'(p_0)\big)(p-p^*)\,d\mu
		&=
		\int_\Xi \inner{\lambda}{T}(p-p^*)\,d\mu \\
		&=
		\inner{\lambda}{\E_p[T]-\E_{p^*}[T]}
		=
		0.
	\end{aligned}
	\]
	Here $G'(p^*)-G'(p_0)=\inner{\lambda}{T}$ by \eqref{eq:soln}, and both
	$p$ and $p^*$ meet the constraint. This gives \eqref{eq:pyth}. Therefore
	\[
	D_G(p\,\|\,p_0)\ge D_G(p^*\,\|\,p_0),
	\]
	with equality iff $D_G(p\,\|\,p^*)=0$. {Since the separable
		divergence is nonnegative and, by pointwise strict convexity of $G$,
		vanishes only where $p=p^*$, finiteness forces $p=p^*$ a.e. Uniqueness of
		$\lambda$ modulo $\mathcal{N}$ follows from strict concavity of $\Phi$ \revised{on
		$\mathcal{D}$ modulo $\mathcal{N}$ (Lemma~\ref{lem:dual})}.}
\end{proof}

\begin{remark}[Moment orthogonality]
	\label{rem:orth}
	The vanishing cross term in \eqref{eq:pyth} is the geometric heart of the
	theory. The residual direction
	\[
	\nabla G(p^*)-\nabla G(p_0)=\inner{\lambda}{T}
	\]
	lies in the span of the constraints. Hence {feasibility of
		both $p$ and $p^*$ makes the dual pairing
		\[
		\int_\Xi
		\big(\nabla G(p^*)-\nabla G(p_0)\big)(p-p^*)\,d\mu
		\]
		vanish: the natural-coordinate residual is orthogonal to the
		mean-coordinate displacement $p-p^*$.} Read as the dual stationary
	condition $\E_{p^*_\lambda}[T]=c$, the same equation is a $Z$-estimating
	equation (\S\ref{sec:dual}); read in the generalized linear model
	(\S\ref{sec:glm}) it is the score equation.
\end{remark}

\subsection{The m-projection, by conjugacy}

The Pythagorean theorem has a mirror image obtained for free. Projecting
in the \emph{second} argument onto an e-flat family
{$\mathcal{E}$},
\[
\min_{p\in\mathcal{E}}\ D_G(p_0\,\|\,p),
\]
is, by the dual-divergence identity \eqref{eq:dualdiv}, the e-projection
of $\nabla G(p_0)$ onto the m-flat image $\nabla G(\mathcal{E})$ with
respect to the conjugate divergence $D_F$
{under the analogues of (A1)--(A4) for the conjugate problem}.

\begin{corollary}[The two projections are one theorem]
	\label{cor:mproj}
	The m-projection
	\[
	\min_{p\in\mathcal{E}}D_G(p_0\,\|\,p)
	\]
	onto an e-flat family exists and is unique
	{under the corresponding assumptions for the conjugate
		generator $F$}, has the conjugate solution form, and obeys the dual
	Pythagorean identity
	\[
	D_F\big(\nabla G(p)\,\|\,\nabla G(p_0)\big)
	=
	D_F\big(\nabla G(p)\,\|\,\nabla G(p^*)\big)
	+
	D_F\big(\nabla G(p^*)\,\|\,\nabla G(p_0)\big),
	\]
	equivalently
	\[
	D_G(p_0\,\|\,p)
	=
	D_G(p^*\,\|\,p)
	+
	D_G(p_0\,\|\,p^*)
	\]
	in the m-coordinate, by applying Theorem~\ref{thm:main} to the generator
	$F$.
\end{corollary}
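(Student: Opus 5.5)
The plan is to transport the m-projection problem through the link $\nabla G$ and then apply Theorem~\ref{thm:main} verbatim with $F$ in the role of the generator. Write $\theta_0=\nabla G(p_0)$, and for a candidate $p\in\mathcal{E}=\mathcal{E}_G(p_0',T)$ (reference $p_0'$, statistics $T$) write $\theta=\nabla G(p)$. By Lemma~\ref{lem:dualdiv},
\[
D_G(p_0\,\|\,p)=D_F\big(\nabla G(p)\,\|\,\nabla G(p_0)\big)=D_F(\theta\,\|\,\theta_0).
\]
So the objective becomes a divergence of $F$ in its \emph{first} argument, with fixed reference $\theta_0$. The feasible set $\nabla G(\mathcal{E})=\{\nabla G(p_0')+\inner{\lambda}{T}\}$ is affine in $\theta$. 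I will describe it as an m-flat set for the generator $F$, that is, as a moment set in the $\theta$ coordinate. Since the conjugate of $F$ is $G$ (Fenchel--Moreau, using the convexity and lower semicontinuity supplied by (A1)), the roles of the two coordinates swap, and $\nabla F(\theta)=p$ becomes the ``e-coordinate'' of the conjugate problem.

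The first key step is to rewrite the affine family $\{\theta_1+\inner{\lambda}{T}\}$ as a moment set $\{\theta:\int_\Xi \theta\,S\,d\mu=c'\}$. Here $S$ ranges over functions orthogonal in $L^2(\mu)$ to $\mathrm{span}(T)$. This is a codimension-type description and may need infinitely many constraints. I would state this as part of "the corresponding assumptions for $F$": we assume the analogue of (A2)--(A4) for this constraint system and generator $F$ on $I^*$. Assumption (A1) for $F$ follows from $F''(u)=1/G''(\nabla F(u))>0$, which comes from the inverse-function relation $\nabla F=(G')^{-1}$. The second step is then to invoke Theorem~\ref{thm:main} with $(G,I,p_0,\mathcal{M})\mapsto(F,I^*,\theta_0,\nabla G(\mathcal{E}))$. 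This gives existence and a.e.\ uniqueness of $\theta^*$. It also gives the solution form $\theta^*=\nabla G(\nabla F(\theta_0)+\inner{\eta}{S})$, i.e.\ $p^*=p_0+\inner{\eta}{S}$, which is the "conjugate solution form": the m-projection is mixture-affine in $p_0$. Finally it gives the Pythagorean identity in the displayed $D_F$ form.

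The last step converts back to the m-coordinate. I apply Lemma~\ref{lem:dualdiv} to each of the three $D_F$ terms, setting $\theta=\nabla G(p)$, $\theta^*=\nabla G(p^*)$ and $\theta_0=\nabla G(p_0)$. Then $D_F(\nabla G(p)\|\nabla G(q))=D_G(q\|p)$, and the identity becomes $D_G(p_0\|p)=D_G(p^*\|p)+D_G(p_0\|p^*)$. Uniqueness passes back because $\nabla G$ is a bijection $I\to I^*$.

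As an independent sanity check that avoids the possibly infinite constraint system, I would verify the dual Pythagoras directly. Apply Lemma~\ref{lem:threepoint} with $(a,b,c)=(p_0,p,p^*)$. The cross term is $\int(\nabla G(p^*)-\nabla G(p))(p_0-p^*)\,d\mu$. Here $\nabla G(p^*)-\nabla G(p)=\inner{\lambda^*-\lambda}{T}$, so the cross term equals $\inner{\lambda^*-\lambda}{\E_{p_0}[T]-\E_{p^*}[T]}$. This vanishes exactly under the stationarity condition $\E_{p^*}[T]=\E_{p_0}[T]$, i.e.\ moment matching. That condition is the first-order condition of minimizing over $\lambda$, valid under integrability analogous to (A4). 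I expect the main obstacle to be the first step: expressing $\nabla G(\mathcal{E})$ honestly as a moment set and formulating the conjugate (A2)--(A4). I would handle it by stating those assumptions as hypotheses, in the same spirit as Remark~\ref{rem:a4}. Existence then rests on them, while Pythagoras and uniqueness follow from the direct three-point computation.
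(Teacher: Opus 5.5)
Your proposal is correct and follows the paper's own route. Lemma~\ref{lem:dualdiv} turns $D_G(p_0\,\|\,p)$ into $D_F(\nabla G(p)\,\|\,\nabla G(p_0))$, Theorem~\ref{thm:main} is applied with generator $F$ to the affine image $\nabla G(\mathcal{E})$, and the three $D_F$ terms are translated back. Your caveat is also well founded: $\nabla G(\mathcal{E})$ is in general cut out by infinitely many constraints, so Theorem~\ref{thm:main}, stated for $T:\Xi\to\R^k$, applies literally only in cases such as finite $\Xi$. The paper leaves this point implicit.

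Your direct three-point check with $(a,b,c)=(p_0,p,p^*)$ is the right way to secure Pythagoras and uniqueness, since its cross term $\inner{\lambda^*-\lambda}{\E_{p_0}[T]-\E_{p^*}[T]}$ vanishes by moment matching. Existence can be kept finite-dimensional as well: Fenchel--Young gives $D_G(p_0\,\|\,p_\lambda)=D_G(p_0\,\|\,p_0')-\Phi(\lambda)$, with $\Phi$ the dual function \eqref{eq:dual} for projecting $p_0'$ onto $\mathcal{M}(\E_{p_0}[T])$. Hence (A3) for that problem supplies $\lambda^*$, with no need for conjugate hypotheses on an infinite constraint system.
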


Thus the Pythagorean theorem produces both projections: the
\emph{e-projection} minimizes $D_G(\cdot\,\|\,p_0)$ onto an m-flat set,
the \emph{m-projection} minimizes $D_G(p_0\,\|\,\cdot)$ onto an e-flat set,
and the conjugacy $G\leftrightarrow F$ exchanges them. {No new
	proof is needed for the second direction; it is the same theorem applied
	in conjugate coordinates.}

\begin{remark}[``e-flat'' and ``m-flat'' are relative to the generator]
	\label{rem:relative-flat}
	Flatness is a property of a coordinate, so it swaps under conjugacy: a set
	affine in the e-coordinate $\theta=\nabla G(p)$ is e-flat in the
	$G$-geometry but m-flat in the $F$-geometry, where $\theta$ is the primal
	point. {Here $\theta$ is the primal variable for the
		$F$-geometry, while $p=\nabla F(\theta)$ is the primal variable for the
		$G$-geometry.} This is why one fit can carry two names. The generalized
	linear model of \S\ref{sec:glm} minimizes $D_F(\theta\,\|\,\theta_y)$ over
	a set affine in $\theta$: read in the natural coordinate, with $F$ primal,
	it is an e-projection onto an $F$-m-flat set; read in the mean coordinate,
	with $G$ primal, it is an m-projection onto a $G$-e-flat set. The label
	tracks which generator is taken as primal, not the fit itself.
\end{remark}

\section{Conjugate duality: the e/m swap}
\label{sec:dual}

The dual of the e-projection is the unconstrained concave program
\[
\max_{\lambda\in\mathcal{D}}\Phi(\lambda)
\]
of \eqref{eq:dual}, a generalized log-partition functional, or
equivalently an $F$-transform of the constraints {relative to
	the reference $p_0$}. Three propositions package the geometry of
\S\ref{sec:projections}.

\begin{proposition}[e/m swap under conjugacy]
	\label{prop:swap}
	The e-projection with respect to $D_G$ and the m-projection with respect
	to $D_F$ are one operation in conjugate coordinates. By
	\eqref{eq:dualdiv}, minimizing
	\[
	D_G(p\,\|\,p_0)
	\]
	over {$p\in\mathcal{M}(c)$} is equivalent to minimizing
	\[
	D_F\big(\nabla G(p_0)\,\|\,\nabla G(p)\big)
	\]
	over the image {$\nabla G(\mathcal{M}(c))$}. Thus the same
	problem is read in the conjugate coordinate system with the argument order
	reversed. The labels e- and m-projection are exchanged by
	$G\leftrightarrow F$, and the single Theorem~\ref{thm:main} covers both
	directions, {with Corollary~\ref{cor:mproj} giving the
		conjugate version under the corresponding assumptions.}
\end{proposition}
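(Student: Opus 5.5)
The plan is to reduce everything to the dual-divergence identity of Lemma~\ref{lem:dualdiv}, applied pointwise and then integrated, together with the bijectivity of the link $\nabla G: I\to I^*$ guaranteed by (A1).

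First I will establish the change of variables. Since $G''>0$, the map $G'$ is a strictly increasing homeomorphism of $I$ onto $I^*$ with inverse $\nabla F=(G')^{-1}$. Composition with it is therefore a bijection between measurable maps $p:\Xi\to I$ and measurable maps $\theta:\Xi\to I^*$, and it preserves measurability in both directions. Under this bijection, $\mathcal{M}(c)$ corresponds exactly to $\nabla G(\mathcal{M}(c))=\{\theta:\E_{\nabla F(\theta)}[T]=c\}$. For every such pair, Lemma~\ref{lem:dualdiv} gives $D_G(p\,\|\,p_0)=D_F(\nabla G(p_0)\,\|\,\nabla G(p))$ as an equality in $[0,\infty]$. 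It holds pointwise, so finiteness is also preserved.

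Second, I will conclude equivalence of the two optimization problems. The objective values agree along the bijection, so the infima coincide. Moreover, $p^*$ minimizes the first problem iff $\theta^*=\nabla G(p^*)$ minimizes the second, with uniqueness $\mu$-a.e.\ transferred as well. For the reading ``m-projection w.r.t.\ $D_F$'', note that the variable $\theta$ now sits in the \emph{second} argument of $D_F$, while the reference $\theta_0=\nabla G(p_0)$ sits in the first. In the $F$-geometry $\theta$ is the primal coordinate and its own dual coordinate is $\nabla F(\theta)=p$.

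Third, I will identify the flatness type of the feasible set. The set $\nabla G(\mathcal{M}(c))$ is affine in $\nabla F(\theta)$, which is the e-coordinate of the $F$-geometry. So it is $F$-e-flat, in line with Remark~\ref{rem:relative-flat}. Hence the problem is an m-projection onto an e-flat set for $F$, and the labels e/m swap under $G\leftrightarrow F$.

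Finally, since $F^*=G$ on the relevant domains (via Fenchel--Young and $\nabla F=(G')^{-1}$), the same argument with $F$ in place of $G$ returns the original problem. Existence, form and the Pythagorean identity therefore follow from Theorem~\ref{thm:main} in one direction and from Corollary~\ref{cor:mproj} in the other, under the transported assumptions.

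The main obstacle is bookkeeping rather than analysis. The conjugate $F$ need not be $C^2$ with $F''>0$ everywhere, since $F''=1/G''\circ\nabla F$ is fine but requires care. The transported (A2)--(A4) also have to be stated as hypotheses rather than derived. I would handle this by explicitly assuming the conjugate analogues, as the statement does, and by checking $F''=1/G''(\nabla F)>0$ directly from the inverse function theorem.
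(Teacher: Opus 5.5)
Your proposal is correct and follows the same route as the paper, whose proof of this proposition is simply an appeal to the dual-divergence identity \eqref{eq:dualdiv}. Your bijectivity, finiteness, and flatness bookkeeping makes that appeal explicit. One small correction: under (A1) the conjugate $F$ is automatically $C^2$ on $I^*$ with $F''=1/\big(G''\circ\nabla F\big)>0$, so the worry in your last paragraph that it ``need not be'' is unfounded, as your own inverse-function-theorem check shows.
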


\begin{proposition}[Stationarity is an estimating equation]
	\label{prop:est}
	The dual stationary condition is
	\[
	\E_{p^*_\lambda}[T]=c,
	\]
	a $Z$-estimating equation in $\lambda$. If $G\in C^2(I)$ with $G''>0$ \revised{and differentiation under the
integral sign is valid to second order, then, for
$\lambda\in\operatorname{ri}(\mathcal{D})$,} 
	\[
	{
		-\nabla^2\Phi(\lambda)
		=
		\int_\Xi
		F''\big(\nabla G(p_0)+\inner{\lambda}{T}\big)\,
		T\,T^\top\,d\mu
		\succeq 0.
	}
	\]
	This matrix is the generalized information matrix of the estimating
	equation.
\end{proposition}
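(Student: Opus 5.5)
The plan is to reduce both claims to facts already established. Stationarity comes from the gradient formula of Lemma~\ref{lem:dual}. The Hessian comes from differentiating that gradient once more, which needs the derivative of $\nabla F=(G')^{-1}$.

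\emph{Stationarity.} By Lemma~\ref{lem:dual}, valid on $\operatorname{ri}(\mathcal{D})$ under (A4), we have $\nabla\Phi(\lambda)=c-\E_{p^*_\lambda}[T]$. Setting this gradient to zero gives $\E_{p^*_\lambda}[T]=c$. Its left side is a known map $\lambda\mapsto\int_\Xi \nabla F(\nabla G(p_0)+\inner{\lambda}{T})\,T\,d\mu$, so the condition has the form $\psi(\lambda)=0$ with $\psi(\lambda)=\int(\ldots)\,d\mu-c$, i.e.\ a $Z$-estimating equation. This is the first-order condition already used in the existence part of Theorem~\ref{thm:main}.

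\emph{Differentiability of $F'$.} Since $G\in C^2$ with $G''>0$, the inverse function theorem shows that $F'=(G')^{-1}$ is $C^1$ on $I^*$, with
\[
F''(u)=\frac{1}{G''\big((G')^{-1}(u)\big)}>0 .
\]
This is the scalar form of $\nabla^2F=(\nabla^2G)^{-1}$ recorded earlier.

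\emph{Hessian.} Write $u_\lambda(\xi)=\nabla G(p_0(\xi))+\inner{\lambda}{T(\xi)}$, so that $\partial u_\lambda/\partial\lambda=T$. Differentiate $\nabla\Phi(\lambda)=c-\int_\Xi F'(u_\lambda)\,T\,d\mu$ once more. The second-order differentiation-under-the-integral hypothesis lets us pass the derivative inside. The chain rule gives $\partial_\lambda\big[F'(u_\lambda)T\big]=F''(u_\lambda)\,T\,T^\top$, hence
\[
-\nabla^2\Phi(\lambda)=\int_\Xi F''(u_\lambda)\,T\,T^\top\,d\mu .
\]

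\emph{Positive semidefiniteness.} For any $a\in\R^k$,
\[
a^\top(-\nabla^2\Phi)a=\int_\Xi F''(u_\lambda)\inner{a}{T}^2\,d\mu\ge 0,
\]
because $F''>0$. The expression vanishes only when $\inner{a}{T}=0$ $\mu$-a.e., i.e.\ for $a\in\mathcal{N}$. This is consistent with the strict concavity modulo $\mathcal{N}$ in Lemma~\ref{lem:dual}. In the Shannon case $F''(u_\lambda)=p^*_\lambda$, so the matrix reduces to $\int p^*_\lambda TT^\top\,d\mu$, the usual Fisher information. That identification is interpretive and not part of the proof.

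\emph{Main obstacle.} The only delicate point is justifying the interchange of the second derivative and the integral. The statement assumes it outright, so the proof needs only to invoke that assumption on $\operatorname{ri}(\mathcal{D})$. For the Shannon case, the paper's note on log-partition integrals (Remark~\ref{rem:a4}) could be cited as an illustration that the assumption holds.
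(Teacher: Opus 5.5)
Your proof is correct and follows the paper's route. Stationarity and the gradient come from Lemma~\ref{lem:dual}, and the Hessian formula comes from differentiating $\nabla\Phi(\lambda)=c-\int_\Xi F'\big(\nabla G(p_0)+\inner{\lambda}{T}\big)\,T\,d\mu$ once more under the integral sign, as the hypothesis in the statement permits; your derivation of $F''=1/(G''\circ F')>0$ and your quadratic-form check of $\succeq 0$, with equality exactly on $\mathcal{N}$, fill in details the paper leaves implicit.
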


\begin{proposition}[The generalized evidence is generator-intrinsic]
	\label{prop:evid}
	Under (A1)--(A4), the optimal value
	\[
	{
		\mathcal{Z}_G
		:=
		\Phi(\lambda^*)
		=
		D_G(p^*\,\|\,p_0)
	}
	\]
	is well defined for every admissible generator $G$, independently of any
	relationship to the Shannon log-evidence. It is the $F$-transform of the
	constraints and the basic scalar output of the projection
	{--- an optimal dual value, not a normalizing constant. Hence
		the comparison with Shannon log-evidence is interpretive rather than
		literal.}
\end{proposition}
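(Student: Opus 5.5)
The content of Proposition~\ref{prop:evid} is essentially the strong-duality equality $\Phi(\lambda^*)=D_G(p^*\,\|\,p_0)$, together with the observation that nothing in its derivation refers to the Shannon generator. I would therefore organize the argument around Theorem~\ref{thm:main} and Lemmas~\ref{lem:pointwise}--\ref{lem:dual}, and treat the interpretive clause (``not a normalizing constant'') as a remark rather than something to be proved.

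The first step is well-definedness of $\lambda^*$ and of the value. By (A3) the supremum of $\Phi$ is attained at some $\lambda^*\in\operatorname{ri}(\mathcal{D})$. By Lemma~\ref{lem:dual}, $\Phi$ is constant along $\mathcal{N}$ and strictly concave modulo $\mathcal{N}$. Hence $\lambda^*$ is unique modulo $\mathcal{N}$ and the number $\Phi(\lambda^*)$ does not depend on the chosen representative. Finiteness follows from $\lambda^*\in\mathcal{D}$ and $0\in\mathcal{D}$, which make both integrals in \eqref{eq:dual} finite.

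The second step is the equality itself. Theorem~\ref{thm:main} shows that $p^*=p^*_{\lambda^*}$ is feasible, so $\E_{p^*}[T]=c$. Since $p^*$ also minimizes $L(\cdot,\lambda^*)$ pointwise (Lemma~\ref{lem:pointwise}), we get
\[
\Phi(\lambda^*)=L(p^*,\lambda^*)=D_G(p^*\,\|\,p_0)-\inner{\lambda^*}{\E_{p^*}[T]-c}=D_G(p^*\,\|\,p_0).
\]
Weak duality (Lemma~\ref{lem:dual}) and the Pythagorean identity \eqref{eq:pyth} then identify this common value with $\min_{p\in\mathcal{M}(c)}D_G(p\,\|\,p_0)$. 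So it is intrinsic to the triple $(G,p_0,\mathcal{M}(c))$: it is determined by the primal problem alone, independently of the dual parametrization. Because $D_G(p^*\,\|\,p_0)$ depends only on the a.e.-unique $p^*$, this also re-confirms independence from the representative of $\lambda^*$.

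The third step addresses the generator-intrinsic and $F$-transform claims. Formula \eqref{eq:dual} shows that $\mathcal{Z}_G=\sup_\lambda\{\inner{\lambda}{c}-\int_\Xi F(\nabla G(p_0)+\inner{\lambda}{T})\,d\mu\}+C_0$. This is a Legendre-type transform in $c$ of the convex functional $\lambda\mapsto\int_\Xi F(\cdots)\,d\mu$, and it uses only (A1)--(A4). For contrast with the Shannon case, one specializes via Remark~\ref{rem:a4}: there $F(u)=e^{u-1}$, so $\int_\Xi F(\cdots)\,d\mu=\int_\Xi p_0e^{\inner{\lambda}{T}}\,d\mu$ and a log-partition appears only after optimizing over the normalization multiplier. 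For general $G$ no such factorization of $p^*$ into $p_0\times$(exponential tilt)$/Z$ exists. This is why the statement describes $\mathcal{Z}_G$ as an optimal dual value rather than a normalizing constant.

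The main obstacle, such as it is, lies in making ``well defined'' precise. One must check that the equality holds even though $\Phi$ is defined only on $\mathcal{D}$, with value $-\infty$ elsewhere, and that the a.e.\ uniqueness of $p^*$ suffices. Both points are already settled by (A3) together with Theorem~\ref{thm:main}.
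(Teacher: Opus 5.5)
Your proposal is correct and follows the paper's own route. The paper proves this proposition in one line, by citing Theorem~\ref{thm:main} and the no-gap identity $D_G(p^*\,\|\,p_0)=\Phi(\lambda^*)$; that identity is exactly your second-step computation from feasibility plus pointwise minimization of $L(\cdot,\lambda^*)$, which is the optimality step of that theorem's proof. Your further points are sound elaborations of what the paper leaves as interpretation: independence from the representative of $\lambda^*$ modulo $\mathcal{N}$, and the reading of $\mathcal{Z}_G$ as the conjugate of $\lambda\mapsto\int_\Xi F(\nabla G(p_0)+\inner{\lambda}{T})\,d\mu$ at $c$, with the Shannon log-partition appearing only after the normalization multiplier is eliminated.
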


\begin{proof}
	{	Proposition~\ref{prop:swap} is the dual-divergence identity
		\eqref{eq:dualdiv}. \revised{For Proposition~\ref{prop:est}, the stationarity and the
gradient are Lemma~\ref{lem:dual}; differentiating
$\nabla\Phi(\lambda)=c-\int_\Xi
F'\big(\nabla G(p_0)+\inner{\lambda}{T}\big)\,T\,d\mu$
once more under the integral sign --- the hypothesis carried in the
statement --- gives the Hessian formula.}
        Proposition~\ref{prop:evid}
		follows from Theorem~\ref{thm:main} and the no-gap identity
		\[
		D_G(p^*\,\|\,p_0)=\Phi(\lambda^*).
		\]
	}
\end{proof}

With Part~\ref{part:math} complete, the geometry is fixed: a generator and
its conjugate, two coordinate systems, a projection theorem, a
Pythagorean identity, and the e/m pair it produces. The rest of the paper
interprets it.

\part{Statistical implications}
\label{part:stat}

Part~\ref{part:math} built one operation; Part~\ref{part:stat} reads the
statistics off it. Each method below fixes the three inputs of the
projection --- a generator, a reference point, and a constraint or model
family --- and instantiates the e-projection \eqref{eq:primal} or its
m-projection dual. {In the statistical sections the notation is
	specialized to each model: for example, the GLM link is denoted by
	$g_{\mathrm{link}}$ to avoid conflict with the dual objective
	$\Phi$ of Part~\ref{part:math}.} We begin with the generalized linear
model, where the two projections are clearest and most familiar: under
the canonical link, the score equation is exactly the Pythagorean
orthogonality, and the fit can be read in both natural and mean
coordinates. The remaining examples are arranged around this prototype.

\section{The generalized linear model is a Bregman projection}
\label{sec:glm}

{
	Throughout this section we distinguish between a general GLM and the
	canonical-link GLM. The Bregman interpretation exists for a general link
	as a divergence minimization over a possibly curved model manifold, but
	the projection theorem of Part~\ref{part:math} applies most directly in
	the canonical-link case, where the natural parameter is affine in the
	regression coefficients.
}

\subsection{The model is the geometry}
\label{sec:glm-model}

A generalized linear model is built from three ingredients
\cite{NelderWedderburn1972,McCullaghNelder1989}: a \emph{random component},
in which each $y_i$ follows an exponential-family law with cumulant $F$ (the log-partition function, written \revised{componentwise as
$b(\cdot)$} in the GLM literature); a
\emph{systematic component}, the linear predictor $X\beta$; and a
\emph{link} tying the mean to the predictor,
{  $g_{\mathrm{link}}(\mu)=X\beta$}. {  Each ingredient is a
	piece of the geometry of Part~\ref{part:math}. This subsection reads the
	model alone through that dictionary --- no data and no estimation yet, and
	the link left general.}

{ 
	The random component supplies the generator in the natural coordinate.}
For a regular exponential family, the cumulant function $F$ induces the
Bregman divergence attached to the response family through the
exponential-family--Bregman correspondence of \cite{Banerjee2005}. Its
Legendre dual
\[
G=F^*
\]
is the mean-domain generator and supplies the deviance below.

{The systematic component and the link together supply the model
	manifold.} Solving
\[
g_{\mathrm{link}}(\mu)=X\beta
\]
for the natural parameter gives, componentwise,
\[
{
	\mathcal{E}_X^{g}
	=
	\Big\{
	\theta(\beta)
	=
	(\nabla F)^{-1}\big(g_{\mathrm{link}}^{-1}(X\beta)\big):
	\beta\in\R^p
	\Big\}.
}
\]
This is a $p$-dimensional manifold in the natural coordinate. For a
general link it is curved. Under the canonical link,
\[
{
	g_{\mathrm{link}}=\nabla G=(\nabla F)^{-1},
}
\]
this manifold straightens to
\[
{
	\mathcal{E}_X
	=
	\{X\beta:\beta\in\R^p\}
	=
	\operatorname{col}(X),
}
\]
a linear subspace in the natural coordinate.

{	Thus a GLM supplies a generator and a model manifold. What it does not
	supply is the point to be projected; that point arrives with the data. The
	canonical link is the choice that makes the model manifold affine in the
	natural coordinate, and hence makes the projection theorem directly
	available.
}

\subsection{The likelihood is the divergence}

{	Estimation enters with the data, which supply the remaining ingredient of
	a projection: the point.} This point is the \emph{saturated} natural
parameter $\theta_y$, defined by
\[
\nabla F(\theta_y)=y,
\]
that is, the formal model point reproducing the observation exactly. With
natural parameters $\theta=(\theta_1,\dots,\theta_n)$ and cumulant
\[
F(\theta)=\sum_i b(\theta_i),
\]
the mean is
\[
\mu=\nabla F(\theta),
\]
and, up to a constant independent of $\theta$,
\[
\ell(\theta)=\inner{y}{\theta}-F(\theta)+\text{const}.
\] 
\revised{The dispersion parameter is set to one throughout; a known
common dispersion $\phi$ divides the objective by $\phi$ and rescales the scaled
deviance to $2D_G(y\,\|\,\hat\mu)/\phi$.} Using the Bregman convention of \eqref{eq:pointwise}, linearizing at the
second argument gives
\[
\begin{aligned}
	D_F(\theta\,\|\,\theta_y)
	&=
	F(\theta)-F(\theta_y)-\inner{\nabla F(\theta_y)}{\theta-\theta_y}  \\
	&=
	F(\theta)-\inner{y}{\theta}
	+
	\underbrace{\big[\inner{y}{\theta_y}-F(\theta_y)\big]}_{\text{const}} \\
	&=
	-\ell(\theta)+\text{const}.
\end{aligned}
\]
Thus, up to an additive constant independent of $\theta$, the negative
log-likelihood is the Bregman divergence from the fitted natural parameter
to the saturated natural parameter. Hence maximizing the likelihood is
equivalent to
\[
{
	\widehat\theta
	=
	\theta(\widehat\beta)
	=
	\arg\min_{\theta\in\mathcal{E}_X^{g}}
	D_F(\theta\,\|\,\theta_y).
}
\]
Under the canonical link this becomes
\[
	\widehat\theta
	=
	X\widehat\beta
	=
	\arg\min_{\theta\in\mathcal{E}_X}
	D_F(\theta\,\|\,\theta_y).
\]
{
	This is the natural-coordinate Bregman projection of the saturated model
	point onto the GLM model manifold. For a general link the target manifold
	$\mathcal{E}_X^g$ is curved; for the canonical link it is affine, so the
	Pythagorean theorem of Part~\ref{part:math} applies directly after reading
	the problem in the conjugate $F$-geometry.
}

\subsection{The score equation is the orthogonality}
\label{sec:glm-score}

For a general smooth strictly monotone link, stationarity of the
Bregman objective gives a weighted orthogonality relation. Since
\[
\nabla_\theta D_F(\theta\,\|\,\theta_y)
=
\nabla F(\theta)-\nabla F(\theta_y)
=
\mu-y,
\]
the chain rule along $\theta(\beta)$ gives
\[
\frac{\partial\theta_i}{\partial\beta}
=
\frac{x_i}{b''(\theta_i)\,g_{\mathrm{link}}'(\mu_i)}.
\]
Thus the optimum satisfies
\[
X^\top W(y-\hat\mu)=0,
\qquad
W=
\operatorname{diag}
\left\{
\frac{1}{b''(\theta_i)\,g_{\mathrm{link}}'(\mu_i)}
\right\}.
\]
The weights are parameter-dependent and sit between the residual and the
design.

{Among smooth monotone links, the canonical link is precisely the choice
	that removes these parameter-dependent weights and converts the estimating
	equation into ordinary orthogonality. Indeed,
	$W\equiv I$ exactly when
	\[
	g_{\mathrm{link}}'(\mu)b''(\theta)\equiv 1,
	\]
	equivalently when
	\[
	g_{\mathrm{link}}=\nabla G=(\nabla F)^{-1}
	\]
	up to an additive constant, which is absorbed by an intercept.}

The canonical link is therefore the Legendre dual map already determined
by the conjugacy $G\leftrightarrow F$. It is the link/inverse-link pair
\eqref{eq:links}, the same map that conjugates the divergence
(Lemma~\ref{lem:dualdiv}) and exchanges the dual e- and m-projections
(Proposition~\ref{prop:swap}). For the Gaussian, Bernoulli, and Poisson
cumulants it is the identity, the logit, and the log, respectively.

Under the canonical link, the model is
\[
\theta=X\beta,
\qquad
\mathcal{E}_X=\{X\beta:\beta\in\R^p\},
\]
and the MLE satisfies
\begin{equation}
	\label{eq:score}
	X^\top(y-\hat\mu)=0,
	\qquad
	\hat\mu=\nabla F(X\hat\beta).
\end{equation}
This is the score equation, obtained here as the stationarity of a
Bregman projection. The mean residual $y-\hat\mu$ is orthogonal to the
columns of $X$, i.e. to the tangent directions of the affine model
manifold. {It is the vanishing cross term of
	Remark~\ref{rem:orth} in concrete finite-dimensional form.}

\subsection{Both Pythagorean theorems}
\label{sec:glm-pyth}

With the canonical link in force, apply the three-point identity
\eqref{eq:threepoint} for the generator $F$ with
\[
(a,b,c)=(\theta,\theta_y,\hat\theta),
\qquad
\hat\theta=X\hat\beta.
\]
Then
\[
D_F(\theta\,\|\,\theta_y)
=
D_F(\theta\,\|\,\hat\theta)
+
D_F(\hat\theta\,\|\,\theta_y)
+
\inner{\hat\mu-y}{\theta-\hat\theta}.
\]
For any model point $\theta=X\beta$, the cross term is
\[
\inner{\hat\mu-y}{X(\beta-\hat\beta)}
=
-(\beta-\hat\beta)^\top X^\top(y-\hat\mu)
=
0
\]
by \eqref{eq:score}. Hence
\begin{equation}
	\label{eq:glm-prim}
	D_F(\theta\,\|\,\theta_y)
	=
	D_F(\theta\,\|\,\hat\theta)
	+
	D_F(\hat\theta\,\|\,\theta_y),
	\qquad
	\theta\in\mathcal{E}_X.
\end{equation}
This is the Pythagorean theorem in the natural coordinate, with $F$ taken
as the primal generator.

Translating each term through the dual-divergence identity
\eqref{eq:dualdiv},
\[
D_F(u\,\|\,v)
=
D_G(\nabla F(v)\,\|\,\nabla F(u)),
\]
with
\[
\mu=\nabla F(\theta),
\qquad
y=\nabla F(\theta_y),
\qquad
\hat\mu=\nabla F(\hat\theta),
\]
gives the mean-coordinate identity
\begin{equation}
	\label{eq:glm-dual}
	D_G(y\,\|\,\mu)
	=
	D_G(y\,\|\,\hat\mu)
	+
	D_G(\hat\mu\,\|\,\mu).
\end{equation}

{	Thus the same fit has two readings. In the natural coordinate it is the
	projection
	\[
	\min_{\theta\in\mathcal{E}_X}D_F(\theta\,\|\,\theta_y),
	\]
	an e-projection relative to the $F$-geometry. In the mean coordinate it is
	the dual m-projection
	\[
	\min_{\mu\in\nabla F(\mathcal{E}_X)}D_G(y\,\|\,\mu),
	\]
	where the second argument varies over the mean-manifold. The labels
	``e'' and ``m'' are therefore relative to the chosen primal generator, as
	in Remark~\ref{rem:relative-flat}.} The model deviance is
\[
D_G(y\,\|\,\hat\mu),
\]
up to the conventional factor of two; the textbook deviance is
$2D_G(y\,\|\,\hat\mu)$.

\begin{figure}[ht]
	\centering
	\begin{tikzpicture}[>={Stealth[length=2.2mm]}, thick,
		dot/.style={circle,fill,inner sep=1.2pt}]
		
		\draw[rounded corners, draw=blue!45, fill=blue!6] (0,0) rectangle (5.2,4.8);
		\node[align=center, blue!55!black] at (2.6,4.35)
		{\footnotesize\textbf{Natural coordinate} $\theta$ \;($e$-space)};
		\draw[thick] (0.7,1.35) -- (4.6,1.35);
		\node[above] at (3.75,1.4) {\footnotesize $\mathcal{E}_X=\{X\beta\}$ ($e$-flat)};
		\node[dot] (ty) at (2.2,3.45) {};
		\node[above] at (2.2,3.5) {\footnotesize $\theta_y$ (saturated)};
		\node[dot] (th) at (2.2,1.35) {};
		\draw (ty) -- (th);
		\draw (2.2,1.6) -- (2.45,1.6) -- (2.45,1.35);
		\node[below] at (2.12,1.3) {\footnotesize $\hat\theta$};
		\node[left] at (2.1,2.45) {\footnotesize $e$-projection};
		
		\draw[rounded corners, draw=red!45, fill=red!6] (7.4,0) rectangle (12.6,4.8);
		\node[align=center, red!55!black] at (10.0,4.35)
		{\footnotesize\textbf{Mean coordinate} $\mu$ \;($m$-space)};
		\draw[thick] (8.0,1.5) to[out=22,in=158] (12.0,1.5);
		\node[below] at (11.0,1.55) {\footnotesize $\nabla F(\mathcal{E}_X)$ (curved)};
		\node[dot] (yy) at (9.5,3.45) {};
		\node[above] at (9.5,3.5) {\footnotesize $y$ (data)};
		\node[dot] (mh) at (9.5,2.12) {};
		\draw (yy) -- (mh);
		\node[below left] at (9.58,2.12) {\footnotesize $\hat\mu$};
		\node[right, align=left] at (9.62,2.75)
		{\footnotesize $m$-projection};
		\node[right] at (9.62,2.45) {\footnotesize deviance $D_G(y\,\|\,\hat\mu)$};
		
		\draw[->] (5.35,2.95) -- (7.25,2.95);
		\node[above, align=center] at (6.3,3.0)
		{\footnotesize $\nabla F$\\[-2pt]\scriptsize (inverse link)};
		\draw[->] (7.25,1.55) -- (5.35,1.55);
		\node[below, align=center] at (6.3,1.5)
		{\footnotesize $\nabla G$\\[-2pt]\scriptsize (link)};
	\end{tikzpicture}
	\caption{The dual map underlying a canonical-link generalized linear model.
		The same fit is read in two coordinate systems linked by the Legendre maps
		$\nabla F$ and $\nabla G$. In the natural coordinate $\theta$, the model
		$\mathcal{E}_X=\{X\beta\}$ is affine, and the fit is the projection
		$\min_{\theta\in\mathcal{E}_X}D_F(\theta\,\|\,\theta_y)$ of the saturated
		model point $\theta_y$ onto it. The score equation \eqref{eq:score} is the
		orthogonality condition, and \eqref{eq:glm-prim} is the corresponding
		Pythagorean identity. Applying $\nabla F$ carries the picture to the mean
		coordinate $\mu$, where $\theta_y$ becomes the data $y$, the same fit is
		the dual m-projection
		$\min_{\mu\in\nabla F(\mathcal{E}_X)}D_G(y\,\|\,\mu)$, and
		\eqref{eq:glm-dual} measures the deviance through
		$D_G(y\,\|\,\hat\mu)$, up to the conventional factor of two.
		\revised{The right angle is schematic: the orthogonality is the
		dual pairing of Remark~\ref{rem:orth}, between the mean residual
		$y-\hat\mu$ and the columns of $X$, and reduces to a Euclidean
		angle in $\theta$ only in the Gaussian case.}        
        }
	\label{fig:glm-dual}
\end{figure}

\subsection{Three worked cases}

\begin{example}[Gaussian / least squares]
	$b(\theta)=\theta^2/2$, so $\mu=\theta$ and
	\[
	D_G(y\,\|\,\mu)=\frac12\|y-\mu\|^2.
	\]
	The projection is the ordinary orthogonal projection of $y$ onto
	$\operatorname{col}(X)$. The score equation \eqref{eq:score} is the
	normal equation
	\[
	X^\top(y-X\hat\beta)=0,
	\]
	and \eqref{eq:glm-prim} is the least-squares Pythagorean decomposition.
\end{example}

\begin{example}[Bernoulli / logistic regression]
	$b(\theta)=\log(1+e^\theta)$, so
	\[
	\mu=(1+e^{-\theta})^{-1}.
	\]
	The mean-domain divergence is
	\[
	D_G(y\,\|\,\mu)
	=
	\sum_i
	\left[
	y_i\log\frac{y_i}{\mu_i}
	+
	(1-y_i)\log\frac{1-y_i}{1-\mu_i}
	\right],
	\]
	with the usual conventions $0\log0=0$ and $0\log(0/\mu)=0$. Thus logistic
	regression is the Bernoulli-KL projection of the responses onto the
	logistic mean manifold, provided the finite MLE exists; see
	Remark~\ref{rem:glm-boundary}.
\end{example}

\begin{example}[Poisson regression]
	$b(\theta)=e^\theta$, so $\mu=e^\theta$, and
	\[
	D_G(y\,\|\,\mu)
	=
	\sum_i
	\left[
	y_i\log\frac{y_i}{\mu_i}-y_i+\mu_i
	\right].
	\]
	The conventional Poisson deviance is $2D_G(y\,\|\,\mu)$, minimized by the
	projection.
\end{example}

\begin{remark}[Boundary cases and existence]
	\label{rem:glm-boundary}
	For Bernoulli and Poisson responses, the saturated natural parameter can
	be infinite: $\theta_{y_i}=\pm\infty$ when $y_i\in\{0,1\}$ for the logit,
	and $\theta_{y_i}=-\infty$ when $y_i=0$ for Poisson. Thus
	$\nabla F(\theta_y)=y$ and $F(\theta_y)$ are read in the extended closure
	sense, and the deviance is defined by the corresponding limiting values.
	Existence of the fit is likewise not automatic: the projection is
	attained only when $\hat\theta=X\hat\beta$ is finite. This can fail under
	complete or quasi-complete separation in logistic regression. {
		Finite existence can also fail for Poisson regression in boundary cases,
		depending on the design and zero-count patterns; logistic separation is
		simply the most familiar instance.}
\end{remark}

{For canonical-link regular exponential families, maximum likelihood can
	therefore be interpreted as a direct instance of the projection theorem of
	Part~\ref{part:math}: the cumulant supplies the natural-coordinate
	generator, the canonical link supplies the Legendre dual map, the design
	matrix supplies the affine model manifold, and the saturated model supplies
	the point to be projected. The score equation is the Pythagorean
	orthogonality, the two coordinate systems give the two Pythagorean
	decompositions, and the deviance is twice the mean-coordinate Bregman
	divergence minimized by the fit.
}

\section{Maximum entropy and maximum likelihood}
\label{sec:maxent}

Work in the space of distributions on $\Xi$ with the Shannon generator
\[
G(t)=t\log t,
\]
reference distribution $p_0$, and statistic $T:\Xi\to\R^k$. Two flat
families meet here. The first is the moment set
\[
\mathcal{M}(c)=\{p:\E_p[T]=c\},
\]
which is m-flat. The second is the exponential family
\[
{
	\mathcal{E}
	=
	\left\{
	p_\lambda
	=
	\frac{p_0 e^{\inner{\lambda}{T}}}{Z(\lambda)}
	:
	\lambda\in\R^k,\ 
	Z(\lambda)=\int_\Xi p_0 e^{\inner{\lambda}{T}}\,d\mu<\infty
	\right\},
}
\]
which is e-flat in the Shannon geometry. {The normalizing
	constant is included here because, in this section, we work with
	probability distributions rather than arbitrary positive measures.}

\paragraph{Maximum entropy is the e-projection.}
Minimizing
\[
D_G(p\,\|\,p_0)=\KL(p\,\|\,p_0)
\]
over $\mathcal{M}(c)$ is the e-projection of $p_0$ onto the moment set $\mathcal{M}(c)$.
By Theorem~\ref{thm:main}, its solution has the Gibbs form
\[
p^*
=
p_{\lambda^*}
=
\frac{p_0 e^{\inner{\lambda^*}{T}}}{Z(\lambda^*)}
\in\mathcal{E},
\]
where $\lambda^*$ is chosen so that
\[
\E_{p^*}[T]=c.
\]
{	This statement presumes feasibility, attainment, and finiteness of
	$Z(\lambda)$ in a neighborhood of $\lambda^*$; these are the Shannon
	versions of the standing assumptions in \S\ref{sec:projections}.}
Thus the e-projection generates the exponential family: a prescribed mean
on the positive half-line gives an exponential law, prescribed mean and
variance on $\R$ give a Gaussian, and so on.

\paragraph{Maximum likelihood is the m-projection.}
Given data $x_1,\dots,x_n$, let
\[
\hat p=\frac1n\sum_{i=1}^n\delta_{x_i},
\qquad
\hat c=\E_{\hat p}[T]=\frac1n\sum_{i=1}^nT(x_i).
\]
Fitting the exponential family by maximum likelihood is equivalent to
minimizing
\[
D_G(\hat p\,\|\,p_\lambda)
=
\KL(\hat p\,\|\,p_\lambda)
=
-H(\hat p)-\E_{\hat p}[\log p_\lambda]
\]
over $\lambda$, because $H(\hat p)$ is constant in $\lambda$ and
$-\E_{\hat p}[\log p_\lambda]$ is the negative average log-likelihood.
For a discrete sample space this is literally a KL divergence. For
continuous models, $\hat p$ is discrete while $p_\lambda$ is usually a
density with respect to $\mu$; hence the expression is read as the
empirical cross-entropy
\[
-\frac1n\sum_{i=1}^n\log p_\lambda(x_i),
\]
rather than as a literal density KL. {Equivalently, the
	continuous case is the usual likelihood calculation written in the
	formal KL notation.} This is the m-projection of the empirical
distribution $\hat p$ onto the e-flat family $\mathcal{E}$, varying the
second argument of the divergence.

\begin{proposition}[Maximum-entropy / maximum-likelihood duality]
	\label{prop:maxent-ml}
	\revised{Let the maximum-entropy reference and the base point of
	$\mathcal{E}$ be the same $p_0$, as above, and take the
	maximum-entropy target to be the empirical moment $\hat c$; the
	coincidence asserted below depends on both identifications.}
	Assume the exponential family is regular and minimal modulo the null space
	$\mathcal{N}$, the empirical moment $\hat c$ lies in the relative interior
	of the attainable moment set, and the log-partition function is finite in
	a neighborhood of the optimum. Then the e-projection of $p_0$ onto
	$\mathcal{M}(\hat c)$ and the m-projection of $\hat p$ onto
	$\mathcal{E}$ coincide. Their common value is the unique distribution
	\[
	p_{\hat\lambda}\in \mathcal{E}\cap\mathcal{M}(\hat c).
	\]
	Moreover, for every $p\in\mathcal{M}(\hat c)$ and every
	$q\in\mathcal{E}$,
	\begin{equation}
		\label{eq:maxent-ml-pyth}
		D_G(p\,\|\,p_0)
		=
		D_G(p\,\|\,p_{\hat\lambda})
		+
		D_G(p_{\hat\lambda}\,\|\,p_0),
		\qquad
		D_G(\hat p\,\|\,q)
		=
		D_G(\hat p\,\|\,p_{\hat\lambda})
		+
		D_G(p_{\hat\lambda}\,\|\,q).
	\end{equation}
\end{proposition}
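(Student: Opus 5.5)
The plan is to build the common point first and then read both projections off it. By the hypotheses, $\hat c$ lies in the relative interior of the attainable moment set and the log-partition is finite near the optimum. Hence the log-likelihood $\lambda\mapsto\inner{\lambda}{\hat c}-\log Z(\lambda)$ is concave and coercive modulo $\mathcal{N}$, and it attains its maximum at some $\hat\lambda$. The stationarity condition there is $\E_{p_{\hat\lambda}}[T]=\hat c$, by differentiating $\log Z$ under the integral in the Shannon case (Remark~\ref{rem:a4}). Thus $p_{\hat\lambda}\in\mathcal{E}\cap\mathcal{M}(\hat c)$.

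\textbf{The intersection is a single point.} If $p_{\lambda_1},p_{\lambda_2}$ both lie in it, then
\[
\KL(p_{\lambda_1}\|p_{\lambda_2})+\KL(p_{\lambda_2}\|p_{\lambda_1})=\inner{\lambda_1-\lambda_2}{\E_{p_{\lambda_1}}[T]-\E_{p_{\lambda_2}}[T]}=0,
\]
where the first equality is a direct log-ratio computation. This forces $p_{\lambda_1}=p_{\lambda_2}$, and by minimality $\lambda_1-\lambda_2\in\mathcal{N}$.

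\textbf{The e-projection.} I would apply Theorem~\ref{thm:main} with $G(t)=t\log t$, appending $T_0\equiv 1$, $c_0=1$ to the statistic. The solution $p^*=p_0e^{\lambda_0+\inner{\lambda}{T}}$ is exactly a normalized member $p_\lambda\in\mathcal{E}$, and it satisfies the moment constraint. By uniqueness of the intersection point, $p^*=p_{\hat\lambda}$. The first identity in \eqref{eq:maxent-ml-pyth} is then \eqref{eq:pyth}. To be safe I would also verify it directly from Lemma~\ref{lem:threepoint}. The cross term is $\int(\log p_{\hat\lambda}-\log p_0)(p-p_{\hat\lambda})\,d\mu=\inner{\hat\lambda}{\E_p T-\hat c}-\log Z(\hat\lambda)\int(p-p_{\hat\lambda})\,d\mu$, which vanishes because both $p$ and $p_{\hat\lambda}$ are normalized and have moment $\hat c$.

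\textbf{The m-projection.} Rather than invoking Corollary~\ref{cor:mproj} abstractly, I would verify the second identity by a direct computation. This also covers the continuous case, where $\KL(\hat p\|q)$ is read as cross-entropy minus a constant. For $q=p_\lambda$ the log-ratio is
\[
\log\frac{p_{\hat\lambda}}{p_\lambda}=\inner{\hat\lambda-\lambda}{T}-\log Z(\hat\lambda)+\log Z(\lambda).
\]
Its expectation under $\hat p$ equals its expectation under $p_{\hat\lambda}$, since both measures are normalized and both give $T$ the mean $\hat c$. The left side equals $\KL(\hat p\|p_\lambda)-\KL(\hat p\|p_{\hat\lambda})$, with the common term $-H(\hat p)$ cancelling. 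The right side is $\KL(p_{\hat\lambda}\|p_\lambda)$. This gives the second identity.

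\textbf{Conclusion.} Nonnegativity of $\KL$, together with the uniqueness argument above, shows that $p_{\hat\lambda}$ is the unique minimizer over $\mathcal{E}$. So the maximum-likelihood fit and the maximum-entropy solution coincide at this common point.

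\textbf{Main obstacle.} The main difficulty is existence of $\hat\lambda$ with finite $Z$ in a neighborhood of it, i.e.\ verifying (A3) in this setting. I would use the standard exponential-family argument: for $\hat c$ in the relative interior of the convex support, the log-likelihood tends to $-\infty$ along every direction not in $\mathcal{N}$. Steepness or regularity then guarantees that the maximizer is interior. The rest is bookkeeping with the normalization constraint.
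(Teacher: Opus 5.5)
Your proof is correct and follows the paper's route. MLE stationarity and the Gibbs form of the maximum-entropy solution both place the fit in $\mathcal{E}\cap\mathcal{M}(\hat c)$, uniqueness of that intersection identifies the two, and both Pythagorean identities come from the three-point cross term vanishing, since log-ratios within $\mathcal{E}$ are affine in $(1,T)$ and the relevant measures share unit mass and moment $\hat c$. Your symmetrized-KL computation and coercivity argument make explicit what the paper only asserts from the regularity and minimality hypotheses, namely the injectivity of $\lambda\mapsto\E_{p_\lambda}[T]$ modulo $\mathcal{N}$ and the existence of $\hat\lambda$.
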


\revised{In the continuous case the terms $D_G(\hat p\,\|\,q)$ and
$D_G(\hat p\,\|\,p_{\hat\lambda})$ in the second identity are read as
empirical cross-entropies, as above; the identity holds in that reading
because the formally divergent entropy of $\hat p$ is common to both
sides and cancels.}

\begin{proof}
	For $p_\lambda\in\mathcal{E}$,
	\[
	\log p_\lambda
	=
	\log p_0+\inner{\lambda}{T}-\log Z(\lambda).
	\]
	Thus the average log-likelihood is, up to terms independent of $\lambda$,
	\[
	\E_{\hat p}[\log p_\lambda]
	=
	\inner{\lambda}{\hat c}-\log Z(\lambda)+\text{const}.
	\]
	Its stationarity condition is
	\[
	\nabla_\lambda
	\bigl[
	\inner{\lambda}{\hat c}-\log Z(\lambda)
	\bigr]
	=
	\hat c-\E_{p_\lambda}[T]
	=
	0.
	\]
	Therefore the MLE $p_{\hat\lambda}$ matches the empirical moments:
	\[
	\E_{p_{\hat\lambda}}[T]=\hat c.
	\]
	Hence
	\[
	p_{\hat\lambda}\in\mathcal{E}\cap\mathcal{M}(\hat c).
	\]
	
	On the other hand, the maximum-entropy solution with target
	$c=\hat c$ has the Gibbs form
	\[
	p^*
	=
	\frac{p_0 e^{\inner{\lambda^*}{T}}}{Z(\lambda^*)}
	\in\mathcal{E}
	\]
	and satisfies
	\[
	\E_{p^*}[T]=\hat c.
	\]
	Thus it also lies in $\mathcal{E}\cap\mathcal{M}(\hat c)$. Under the
	regularity and minimality assumptions, the map
	$\lambda\mapsto\E_{p_\lambda}[T]$ is injective modulo $\mathcal{N}$, so
	the intersection contains a unique finite point. Hence the maximum-entropy
	and maximum-likelihood solutions coincide.
	
	The first identity in \eqref{eq:maxent-ml-pyth} is the Pythagorean
	identity of the e-projection onto the moment set. For the second identity,
	apply the three-point identity \eqref{eq:threepoint} with
	\[
	(a,b,c)=(\hat p,q,p_{\hat\lambda}).
	\]
	Since both $q$ and $p_{\hat\lambda}$ lie in the exponential family,
	\[
	\nabla G(p_{\hat\lambda})-\nabla G(q)
	\]
	lies in the span of the sufficient statistics \revised{and the
	constant functions, the constant being the difference of the
	log-normalizers}. Since both $\hat p$ \revised{(by the definition
	$\hat c=\E_{\hat p}[T]$)} and
	$p_{\hat\lambda}$ have moment $\hat c$\revised{, and both have unit
	total mass, which disposes of the constant term}, the integrated cross term
	vanishes. This gives the second Pythagorean identity. {Thus
		the two decompositions are the e- and m-sides of the same intersection
		geometry.}
\end{proof}

\begin{remark}[Existence of the intersection]
	\label{rem:maxent-exist}
	The unique finite intersection point presumes the usual regularity: the
	exponential family is minimal modulo $\mathcal{N}$, the target moment
	$\hat c$ lies in the relative interior of the attainable moment set, and
	the log-partition function is finite near $\hat\lambda$. If $\hat c$ lies
	on the boundary, the statement must be read in the closure, and the MLE
	may fail to exist as a finite point. This is the same boundary phenomenon
	as in Remark~\ref{rem:glm-boundary}.
\end{remark}

Maximum entropy and maximum likelihood are therefore not merely the same
procedure written in different coordinates. They are two different
projections: maximum entropy projects the reference $p_0$ onto the moment
foliation, while maximum likelihood projects the empirical distribution
$\hat p$ onto the exponential foliation. They arrive at the same point
because the m-flat and e-flat families intersect there. The exponential
family is at once the image of the e-projection and the target of the
m-projection, and the two Pythagorean identities in
\eqref{eq:maxent-ml-pyth} record the two sides of that single
intersection. {Figure~\ref{fig:maxent-ml} depicts this
	intersection geometry.}

\begin{figure}[ht]
	\centering
	\begin{tikzpicture}[x={(170:1cm)},y={(55:.7cm)},z={(90:1cm)},
		>={Stealth[length=2.2mm]}, thick,
		dot/.style={circle,fill,inner sep=1.5pt}]
		\node at (0,0,3.05) {\footnotesize $e$-projection (maximum entropy)};
		\draw[solid,looseness=.6] (2.0,-2.0,-1)
		to[bend left] (2.0,2.0,-1)
		to[bend left] (-2.0,2.0,-1)
		to[bend right] (-2.0,-2.0,-1)
		to[bend right] (2.0,-2.0,-1)
		-- cycle;
		\node at (-1.15,1.3,-1.0) {$\mathcal{M}(\hat c)$};
		\node[dot, label={above:$p_0$ (reference)}] at (1,0,2) {};
		\node[dot, label={[label distance=2.5pt]162:$p_{\hat\lambda}$}] at (1,0,-0.5) {};
		\node[fill=white, draw, shape=circle, inner sep=1.5pt,
		label={right:e-geodesic ($\subset\mathcal{E}$)}] at (1,0,1) {};
		\draw[dashed,->] (1,0,2) -- (1,0,-0.42);
		\node[dot, label={below:$p\in\mathcal{M}(\hat c)$}] at (0,1,-1.0) {};
		\draw[dashed] (1,0,-0.5) -- (0,1,-1.0);
	\end{tikzpicture}
	\qquad
	\begin{tikzpicture}[x={(170:1cm)},y={(55:.7cm)},z={(90:1cm)},
		>={Stealth[length=2.2mm]}, thick,
		dot/.style={circle,fill,inner sep=1.5pt}]
		\node at (0,0,3.05) {\footnotesize $m$-projection (maximum likelihood)};
		\draw[solid,looseness=.6] (2.0,-2.0,-1)
		to[bend left] (2.0,2.0,-1)
		to[bend left] (-2.0,2.0,-1)
		to[bend right] (-2.0,-2.0,-1)
		to[bend right] (2.0,-2.0,-1)
		-- cycle;
		\node at (-1.45,1.45,-1.0) {$\mathcal{E}$};
		\node[dot, label={above:$\hat p$ (data)}] at (1,0,2) {};
		\node[dot, label={[label distance=2.5pt]162:$p_{\hat\lambda}$}] at (1,0,-0.5) {};
		\node[fill=white, draw, shape=circle, inner sep=1.5pt,
		label={right:m-geodesic ($\subset\mathcal{M}(\hat c)$)}] at (1,0,1) {};
		\draw[dashed,->] (1,0,2) -- (1,0,-0.42);
		\node[dot, label={below:$q\in\mathcal{E}$}] at (0,1,-1.0) {};
		\draw[dashed] (1,0,-0.5) -- (0,1,-1.0);
	\end{tikzpicture}
	\caption{The two projections of Proposition~\ref{prop:maxent-ml}. Left:
		maximum entropy is the e-projection of the reference $p_0$ onto the
		moment set $\mathcal{M}(\hat c)$; the projection path is an e-geodesic
		inside the exponential family $\mathcal{E}$. Right: maximum likelihood is
		the m-projection of the empirical distribution $\hat p$ onto
		$\mathcal{E}$; the projection path is the m-geodesic
		$(1-t)\hat p+t\,p_{\hat\lambda}$, which remains inside
		$\mathcal{M}(\hat c)$ because the moments stay equal to $\hat c$. The two
		foot points coincide at the unique intersection
		$\mathcal{E}\cap\mathcal{M}(\hat c)=\{p_{\hat\lambda}\}$.}
	\label{fig:maxent-ml}
\end{figure}

	\paragraph{Beyond the Shannon generator.}
	Nothing in the geometry above is special to Shannon. Theorem~\ref{thm:main}
	was proved for an arbitrary admissible generator. For such a generator,
	the e-projection of $p_0$ onto $\mathcal{M}(\hat c)$ has the solution form
	\[
	p_\lambda
	=
	\nabla F\big(\nabla G(p_0)+\inner{\lambda}{T}\big),
	\]
	which traces out a deformed exponential family
	\[
	\mathcal{E}_G
	=
	\{p_\lambda:\lambda\in\mathcal{D}\}.
	\]
	This is the U-model of \cite{Eguchi}; it reduces to the Gibbs family only
	when $G(t)=t\log t$.
	
	What is special to Shannon is the classical likelihood interpretation of
	the m-side. For a general separable Bregman divergence,
	\[
	\begin{aligned}
		D_G(\hat p\,\|\,q)
		&=
		\int_\Xi G(\hat p)\,d\mu
		-
		\int_\Xi G(q)\,d\mu
		-
		\int_\Xi G'(q)(\hat p-q)\,d\mu  \\
		&=
		\underbrace{\int_\Xi G(\hat p)\,d\mu}_{\text{constant in }q}
		+
		\int_\Xi\bigl[G'(q)q-G(q)\bigr]\,d\mu
		-
		\E_{\hat p}[G'(q)].
	\end{aligned}
	\]
	The only term coupling the data and the model is
	$\E_{\hat p}[G'(q)]$, which is affine in $\hat p$ and can be estimated by
	the sample average
	\[
	\frac1n\sum_{i=1}^n G'(q(x_i)).
	\]
	Thus minimizing $D_G(\hat p\,\|\,q)$ over a model family is generally an
	M-estimation problem associated with a strictly proper scoring rule
	\cite{GneitingRaftery2007,GrunwaldDawid2004}. The Shannon generator gives
	the logarithmic score and recovers maximum likelihood. Power generators
	lead to density-power divergences and robust minimum-divergence
	estimators \cite{Basu1998}.
	
	The duality still survives at the projection level. Along the deformed
	family $\mathcal{E}_G$, the e-coordinate $\nabla G(p_\lambda)$ is affine
	in $T$ by construction, so the data enter the stationarity equation only
	through the empirical moment $\hat c$. The stationary condition is again
	moment matching:
	\[
	\E_{p_\lambda}[T]=\hat c.
	\]
	Consequently generalized maximum entropy and minimum-score estimation meet
	at the intersection
	\[
	\mathcal{E}_G\cap\mathcal{M}(\hat c),
	\]
	when that intersection exists and is unique.
	
	What breaks outside Shannon is the classical exponential-family
	superstructure. The deformed family $\mathcal{E}_G$ need not have a
	product-form likelihood or a finite-dimensional sufficient statistic in
	the usual sense; its dual objective is the integral functional
	$\Phi(\lambda)$ of Theorem~\ref{thm:main}, not necessarily a classical
	log-partition function. Positivity and normalizability are also no longer
	automatic, since $\nabla F$ may reach the boundary of $I$. Thus the choice
	of generator is a statistical choice: it selects a deformed family, a
	proper scoring rule, and an estimator. Section~\ref{sec:future} returns to
	this generator-selection problem.

\section{Survey calibration and the over-identified model}
\label{sec:calib}

{ 
	Consider a finite population $\mathcal{U}=\{1,\dots,N\}$ carrying an
	auxiliary vector $x_i\in\R^k$ and a study variable $y_i$. A sample
	$A\subset\mathcal{U}$ of size $n$ is selected according to a known
	probability sampling design. Thus the first-order inclusion probabilities
	$\pi_i=\Pr(i\in A)>0$ 
	are available for the sampled units, and the design weights
	$d_i=\pi_i^{-1}$ yield the Horvitz--Thompson estimator
	\[
	\hat Y_{\mathrm{HT}}=\sum_{i\in A}d_i y_i
	\]
	for the population total $Y=\sum_{i\in\mathcal{U}}y_i$
	\cite{HorvitzThompson1952}. Often the auxiliary total
	\[
	X=\sum_{i\in\mathcal{U}}x_i
	\]
	is known exactly, while the weighted sample analogue
	$\sum_{i\in A}d_i x_i$ differs from $X$ because of sampling error.
	Calibration \cite{DevilleSarndal1992} adjusts the design weights as little
	as possible while forcing the calibrated weights to reproduce the known
	benchmark total. In the notation of Part~\ref{part:math}, the sample
	index set $A$ plays the role of $\Xi$, the weight vector
	$w=(w_i)_{i\in A}$ plays the role of a positive, unnormalized density, and
	the benchmark equation cuts out an affine moment set.
}

\subsection{Calibration as a single e-projection}

{ 
	The Bregman calibration problem is
}
\[
\min_{\revised{w\in I^{A}}}\ D_G(w\,\|\,d)
\qquad
\text{subject to}
\qquad
\sum_{i\in A}w_i x_i=X,
\]
{ 
	where
}
\[
D_G(w\,\|\,d)
=
\sum_{i\in A}
\bigl[
G(w_i)-G(d_i)-G'(d_i)(w_i-d_i)
\bigr].
\]
{ 
	Thus calibration is the e-projection of the design-weight vector $d$ onto
	the affine moment set
}
\[
\mathcal{M}(X)
=
\left\{
w\in \revised{I^{A}}:
\sum_{i\in A}w_i x_i=X
\right\}.
\]
{ 
	No simplex constraint is imposed: the weights are positive measures, not
	probability vectors.}
\revised{Positivity is a property of the generator rather than an
imposed constraint: for generators with $I=(0,\infty)$, such as Shannon,
the calibrated weights are automatically positive, whereas the quadratic
generator has $I=\R$ and can return negative weights --- the familiar
drawback of regression-type calibration. The contrast-entropy generator
below has $I=(1,\infty)$ and keeps every calibrated weight above one.}

By Theorem~\ref{thm:main}, the calibrated weights have the form
\begin{equation}
	\label{eq:calib}
	w_i
	=
	\nabla F\bigl(\nabla G(d_i)+\lambda^\top x_i\bigr),
\end{equation}
where $\lambda$ is determined by the calibration equation
\[
\sum_{i\in A}w_i(\lambda)x_i=X.
\]
{ 
	The formula is valid whenever
	$\nabla G(d_i)+\lambda^\top x_i$ lies in the dual domain $I^*$\revised{,
	in which case the weights automatically lie in $I$}.}

For the Shannon generator $G(t)=t\log t$, one obtains the raking form
\[
w_i=d_i e^{\lambda^\top x_i}.
\]
For the quadratic generator $G(t)=t^2/2$, one obtains the additive form
\[
w_i=d_i+\lambda^\top x_i.
\]
{ 
	The familiar multiplicative GREG form
	$w_i=d_i(1+\lambda^\top x_i)$ is different: it arises from a chi-square
	distance on the ratios $w_i/d_i$, namely
	$\sum_i d_i\,\phi(w_i/d_i)$ with
	$\phi(r)=\tfrac12(r-1)^2$, rather than from the unweighted separable
	Bregman divergence $D_G(w\,\|\,d)$.}

Thus calibration is the same e-projection mechanism as maximum entropy,
but with a non-uniform reference $d$ and with unnormalized positive
weights. Its dual equation
\[
\sum_{i\in A}w_i(\lambda)x_i=X
\]
is formally analogous to the score equation \eqref{eq:score}. There are
$k$ benchmark equations and $k$ Lagrange multipliers, so this is the
just-identified case. {  If the benchmarks are linearly
	dependent, the multiplier is unique only modulo the null space
	$\mathcal{N}$, exactly as in Theorem~\ref{thm:main}.}

{ 
	\paragraph{The projection is asymptotically a regression estimator.}
	The calibrated weights are used to estimate the study total by
}
\[
\hat Y_w=\sum_{i\in A}w_i y_i.
\]
{ 
	Kim et al.  \cite{kim2026} shows that, under design-based
	regularity conditions, this estimator is asymptotically a regression
	estimator:
}
\[
\hat Y_w
=
\hat Y_{\mathrm{HT}}
+
\bigl(X-\hat X_{\mathrm{HT}}\bigr)^\top\beta_G
+
o_p\bigl(N/\sqrt n\bigr),
\]
where
\[
\hat X_{\mathrm{HT}}=\sum_{i\in A}d_i x_i,
\qquad
\beta_G
=
\left(
\sum_{i\in\mathcal{U}}\pi_i q_i x_i x_i^\top
\right)^{-1}
\sum_{i\in\mathcal{U}}\pi_i q_i x_i y_i,
\qquad
q_i=\frac{1}{G''(d_i)}.
\]
{ 
	This expression has the usual regression-estimator interpretation: it is
	the Horvitz--Thompson estimator corrected by the discrepancy between the
	known auxiliary total $X$ and its Horvitz--Thompson estimate
	$\hat X_{\mathrm{HT}}$. The correction is design-consistent whether or not
	the linear working model is true.}

The dependence on $G$ is forced by the local geometry. Differentiating
\eqref{eq:calib} at $\lambda=0$ gives
\[
w_i
\approx
d_i
+
\frac{1}{G''(d_i)}\lambda^\top x_i
=
d_i+q_i\lambda^\top x_i.
\]
Therefore, near the design weights, every separable Bregman calibration is
an additive adjustment whose curvature weights are
$q_i=1/G''(d_i)$. These weights propagate into the regression coefficient
$\beta_G$.

{ 
	This is where the separable Bregman formulation differs from classical
	Deville--S\"arndal calibration. In the classical ratio-distance form, the
	distance is applied to $w_i/d_i$, so the first-order GREG coefficient is
	the same for a broad class of distances. In the separable formulation, the
	curvature of $G$ remains visible at first order through $q_i$, so the
	choice of generator is a genuine efficiency choice rather than a purely
	notational choice.}

\subsection{The over-identified model}

Calibration imposes one fixed slice of affine constraints. A moment model
imposes a parametric family of such slices and asks which slice is closest
to the empirical distribution.

Let $Z_1,\dots,Z_n$ be i.i.d.\ observations and let
\[
U:\Theta\times\mathcal{Z}\to\R^r
\]
be an estimating function. Suppose the true parameter $\theta_0$ is
identified by
\[
\E_{P_0}[U(\theta_0;Z)]=0,
\qquad
\Theta\subset\R^p.
\]
When $r>p$, the model is over-identified: {  the parameter
	$\theta$ has only $p$ degrees of freedom, whereas the moment equation has
	$r$ components, so the sample moments generally cannot all be set to zero
	by choosing $\theta$.} The remaining $r-p$ restrictions are the
overidentifying restrictions \cite{Hansen1982}.

Work on positive measures on the sample:
\[
\omega=(\omega_1,\dots,\omega_n)\in\revised{I^n},
\]
anchored at the empirical distribution
\[
\hat P_n=(1/n,\dots,1/n).
\]
{ 
	Again, no simplex constraint is imposed; this is the positive-measure
	analogue of the normalized empirical-likelihood and exponential-tilting
	formulations.}

For fixed $\theta$, define the affine moment slice
\[
\mathcal{M}(\theta)
=
\left\{
\omega\in \revised{I^n}:
\sum_{i=1}^n\omega_i U(\theta;Z_i)=0
\right\}.
\]
The slice projection is
\[
\hat\omega(\theta)
=
\arg\min_{\omega\in\mathcal{M}(\theta)}
D_G(\omega\,\|\,\hat P_n).
\]
By Theorem~\ref{thm:main}, whenever this projection exists, it has the
form
\[
\hat\omega_i(\theta)
=
\nabla F
\bigl(
G'(1/n)+\hat\lambda_\theta^\top U(\theta;Z_i)
\bigr),
\]
where $\hat\lambda_\theta$ solves
\[
\sum_{i=1}^n
\hat\omega_i(\theta)U(\theta;Z_i)
=
0.
\]
{ 
	Thus each fixed-$\theta$ step is exactly a calibration problem with target
	total zero and auxiliary vector $U(\theta;Z_i)$.}

The over-identified estimator then chooses the slice with the smallest
projection cost:
\begin{equation}
	\label{eq:overid-est}
	\hat\theta_G
	=
	\arg\min_{\theta\in\Theta}
	D_G\bigl(\hat\omega(\theta)\,\|\,\hat P_n\bigr),
	\qquad
	\revised{\hat\omega(\hat\theta_G)
	=
	\arg\min_{\omega\in\mathcal{M}_\Theta}
	D_G(\omega\,\|\,\hat P_n)},
	\qquad
	\mathcal{M}_\Theta
	=
	\bigcup_{\theta\in\Theta}\mathcal{M}(\theta).
\end{equation}
{ 
	The first equality is the operational definition. The second \revised{states
	that the induced weights solve} the projection onto the curved union of moment slices,
	provided each slice projection exists. Since $\mathcal{M}_\Theta$ is
	generally nonconvex, this is not a convex Bregman projection in the sense
	of Theorem~\ref{thm:main}. Rather, it is a family of exact convex
	slice-projections followed by a nonlinear minimization over $\theta$.}

\paragraph{Pythagoras separates calibration cost from misfit.}
On each fixed slice, Theorem~\ref{thm:main} gives the Pythagorean
decomposition
\[
D_G(\omega\,\|\,\hat P_n)
=
D_G\bigl(\omega\,\|\,\hat\omega(\theta)\bigr)
+
\underbrace{
	D_G\bigl(\hat\omega(\theta)\,\|\,\hat P_n\bigr)
}_{\text{calibration cost at }\theta},
\qquad
\omega\in\mathcal{M}(\theta).
\]
Thus \eqref{eq:overid-est} minimizes the cost of imposing the moment model
at $\theta$. {  The Pythagorean decomposition is exact on each
	affine slice, but it does not extend globally to the curved union
	$\mathcal{M}_\Theta$} \cite{kim2026}.

\paragraph{The conjugate-domain identity.}
Let
\[
\eta^{(0)}=G'(1/n),
\qquad
\hat\eta_i(\theta)
=
G'\bigl(\hat\omega_i(\theta)\bigr)
=
\eta^{(0)}
+
\hat\lambda_\theta^\top U(\theta;Z_i).
\]
Then the calibration cost can be written in the conjugate coordinates as
\begin{equation}
	\label{eq:overid-dual}
	D_G\bigl(\hat\omega(\theta)\,\|\,\hat P_n\bigr)
	=
	\sum_{i=1}^{n}
	D_F\bigl(\eta^{(0)}\,\|\,\hat\eta_i(\theta)\bigr).
\end{equation}
Thus $\hat\theta_G$ selects the parameter whose calibrated natural
coordinates remain closest, in the conjugate divergence $D_F$, to the
constant empirical anchor $\eta^{(0)}$. {  This is the same
	$D_G$/$D_F$ mean--natural pairing that appeared in the GLM section, now
	applied to moment-condition estimation rather than likelihood fitting}
\cite{kim2026}.

\paragraph{One knob over a family of methods.}
The generator selects the member of the method family. For example,
\[
G(\omega)=\frac12(\omega-1/n)^2
\]
gives the quadratic, GMM-type criterion \cite{Hansen1982}\revised{ (with
$I=\R$; as in the calibration case, this member can produce negative
weights)},
\[
G(\omega)=-\log\omega+\omega-1
\]
gives the empirical-likelihood analogue \cite{Owen1988}, and
\[
G(\omega)=\omega\log\omega-\omega+1
\]
gives the exponential-tilting analogue \cite{Kitamura1997}. Because the
simplex constraint $\sum_i\omega_i=1$ has been dropped, these are
unnormalized positive-measure analogues of empirical likelihood and
exponential tilting. Imposing the simplex constraint recovers the standard
generalized empirical likelihood forms \cite{NeweySmith2004}.

\section{Latent-variable inference: EM, variational inference,
	and expectation propagation}
\label{sec:em}

\revised{Many standard latent-variable inference algorithms can be organized in
	terms of the two projections, but they do not all sit at the same distance
	from the exact projection theorem.} The expectation--maximization
algorithm alternates them exactly; when the inference (E-) step is
intractable, the standard remedies --- restricting the variational family,
amortizing it with a network, or trading the reverse-KL projection for its
forward-KL dual --- \revised{can be viewed as variations on} that one step,
organized by the e/m choice. They differ, though, in how literally the
projection theorem applies: EM is an \emph{exact} alternating projection
in the flat exponential-family setting; mean-field and amortized
variational inference are \emph{restricted or curved} approximations, the
family being in general neither e- nor m-flat; \revised{expectation
	propagation runs the dual, m-projection side of the same step;} and
\revised{score matching, flow matching, GANs, and diffusion models belong
	to neighboring training geometries rather than to the exact e/m projection
	theorem itself.} We take them in turn, flagging the status of each.

\subsection{EM as alternating projection}
\label{sec:em-alt}

EM \emph{alternates} two projections in the space of joint distributions
$r(x,z)$, \revised{where $x$ carries the observed data and $z$ the latent
	variable.}
{\revisedblock
	We first record the e-projection that will run the E-step. For a
	prescribed marginal density $\rho(x)$, the set
	\[
	\mathcal{M}(\rho)=\Big\{\,r:\ \textstyle\int r(x,z)\,d\mu(z)=\rho(x)
	\ \ \mu\text{-a.e.}\ x\,\Big\}
	\]
	is m-flat, because the constraints are moments of functions of $x$ alone.
	The e-projection \eqref{eq:primal} of a reference joint $p$, with
	$x$-marginal $p_X$, onto $\mathcal{M}(\rho)$ has the explicit solution
	\[
	q=\arg\min_{r\in\mathcal{M}(\rho)}\KL\big(r\,\|\,p\big)
	=\rho(x)\,p(z\mid x),
	\qquad
	\KL\big(q\,\|\,p\big)=\KL\big(\rho\,\|\,p_X\big).
	\]
	Indeed, it is the Gibbs tilt $q=p\,e^{\lambda(x)}$ of \eqref{eq:soln}
	with $e^{\lambda(x)}=\rho(x)/p_X(x)$\revised{, read with a
	function-valued multiplier, one coordinate per marginal constraint;
	for finite $\Xi$ this is literally \eqref{eq:soln}, and in general the
	displayed solution and value are verified by the chain rule
	$\KL(r\,\|\,p)=\KL(\rho\,\|\,p_X)
	+\E_\rho\big[\KL\big(r(\cdot\mid x)\,\|\,p(\cdot\mid x)\big)\big]$
	for $r\in\mathcal{M}(\rho)$}: the prescribed marginal carries the
	reference conditional, so the conditional factors cancel in the
	divergence and only the marginal discrepancy remains. Since
	$\KL(r\,\|\,p)=-H(r)-\E_r[\log p]$, with
	$H(r)=-\E_r[\log r]$ the Shannon entropy of \S\ref{sec:prelim}, the same
	projection maximizes
	\[
	H(r)+\E_r[\log p]
	\]
	over $\mathcal{M}(\rho)$.
}

{\revisedblock
	Now let the \emph{model manifold} $\mathcal{E}=\{p(x,z;\theta)\}$ be the
	specified working model, e-flat when the complete-data model is an
	exponential family, and let the observed marginal $\tilde p(x)$ supply the
	\emph{data manifold} $\mathcal{M}(\tilde p)$. Estimation seeks a model
	point close to the data manifold, and the objective organizing the search
	is the \emph{free energy}
	\[
	\mathcal{L}(r,\theta)
	=
	\E_r\big[\log p(x,z;\theta)\big]+H(r)
	=
	-\KL\big(r\,\|\,p(\cdot,\cdot\,;\theta)\big).
	\]
	This is the entropy form of the projection above, with the model point as
	reference and $\theta$ now free to vary. At fixed $\theta$, its maximum
	over $r\in\mathcal{M}(\tilde p)$ is
	\[
	\E_{\tilde p}[\log p(x;\theta)]+H(\tilde p),
	\]
	so maximizing $\mathcal{L}$ in both arguments is average maximum
	likelihood. The natural scheme is coordinate ascent --- equivalently,
	alternating minimization of one joint divergence in its two arguments
	\cite{CsiszarTusnady1984} --- passing the \emph{anchor}, the argument held
	fixed, back and forth between the two manifolds.
}

\revised{The EM algorithm is this coordinate ascent written step by step.}
\begin{itemize}
	\item \textbf{E-step $=$ e-projection onto $\revised{\mathcal{M}(\tilde p)}$.}
	Holding $\theta$ fixed, \revised{apply the projection above with the
		current model point as reference and $\rho=\tilde p$:
		\begin{equation}
			\label{eq:estep}
			q^{(t+1)}
			=
			\arg\min_{r\in\mathcal{M}(\tilde p)}
			\KL\big(r\,\|\,p(\cdot,\cdot\,;\theta^{(t)})\big)
			=
			\tilde p(x)\,p(z\mid x;\theta^{(t)}).
		\end{equation}
		This is the posterior attached to the data marginal; the familiar per-$x$
		posterior computation is this projection with the fixed marginal
		suppressed.}
	
	\item \textbf{M-step $=$ m-projection onto $\revised{\mathcal{E}}$.}
	Holding $q=q^{(t+1)}$ fixed, \revised{m-project it onto the model manifold
		by varying the \emph{second} argument:
		\begin{equation}
			\label{eq:mstep}
			p(\cdot,\cdot\,;\theta^{(t+1)})
			=
			\arg\min_{r\in\mathcal{E}}
			\KL\big(q^{(t+1)}\,\|\,r\big).
		\end{equation}
		Since
		\[
		\KL\big(q\,\|\,p(\cdot,\cdot\,;\theta)\big)
		=
		-H(q)-\E_q[\log p(x,z;\theta)]
		\]
		and $H(q)$ is constant in $\theta$, \eqref{eq:mstep} is the familiar
		maximization of the expected complete-data log-likelihood
		$\E_q[\log p(x,z;\theta)]$} --- the m-projection onto the e-flat model
	manifold (Corollary~\ref{cor:mproj}, i.e.\ the e-projection with respect
	to the conjugate $F$).
\end{itemize}

{\revisedblock
	The E-step inherits its exactness from the projection above for any
	specified model; the M-step is the side that needs the exponential-family
	flatness. Namely, \eqref{eq:mstep} is the m-projection of
	Corollary~\ref{cor:mproj} when the model manifold is e-flat and the
	minimizer is attained; otherwise it is a KL minimization over a curved
	family rather than an exact flat projection. Monotonicity is immediate
	from the two anchored steps: the E-step attains the recorded minimum
	$\KL\big(\tilde p\,\|\,p(\cdot\,;\theta^{(t)})\big)$, the M-step can only
	lower the divergence further, and the next E-step minimum is lower still.
	Hence
	\[
	\KL\big(\tilde p\,\|\,p(\cdot\,;\theta^{(t)})\big)
	\]
	is nonincreasing in $t$, equivalently the average log-likelihood
	$\E_{\tilde p}[\log p(x;\theta^{(t)})]$ never decreases. This is the
	free-energy view of \cite{NealHinton1998} and the $e$/$m$ geometry of
	\cite{Amari1995}. For a single observation $x$ the same functional reads
	\[
	\E_q[\log p(x,z;\theta)]+H(q)
	=
	\log p(x;\theta)
	-
	\KL\big(q\,\|\,p(z\mid x;\theta)\big)
	\le
	\log p(x;\theta),
	\]
	the \emph{evidence lower bound}, the ELBO of \S\ref{sec:vi}.
}

Figure~\ref{fig:em} traces the resulting sequence:
\revised{$\mathcal{L}$} increases at each step and, under standard local
regularity, \revised{its limit points are stationary}.

{\revisedblock
	It is worth separating the two assumptions in play. The \emph{algorithm}
	requires only feasibility: the posterior $p(z\mid x;\theta)$ must be
	available to compute the E-step \eqref{eq:estep}, and the M-step
	maximization must be solvable. Granted these, the coordinate ascent and
	likelihood monotonicity above hold for \emph{any} model; no
	exponential-family structure is used for monotonicity itself. Flatness
	enters only on the geometric side, and only for the M-step: it upgrades
	\eqref{eq:mstep} to the exact flat m-projection of
	Corollary~\ref{cor:mproj}, with its Pythagorean relation. The E-step
	\eqref{eq:estep} is an exact e-projection regardless, as shown above.
	When even the posterior is out of reach, the E-step itself must be
	approximated; that failure --- of feasibility, not of flatness --- is what
	the next subsection addresses.
}

\begin{figure}[ht]
	\centering
	\begin{tikzpicture}[scale=0.82, >={Stealth[length=2mm]},
		dot/.style={circle,fill,inner sep=1.1pt}]
		\draw[thick] (2.20,0.45) -- (10.90,2.35);
		\draw[thick] (2.85,4.60) -- (10.90,3.20);
		\node[left] at (2.18,0.45) {$\mathcal{E}$ (model, $e$-flat)};
		\node[left] at (2.83,4.60) {$\mathcal{M}(\tilde p)$ (data, $m$-flat)};
		
		\node[dot] at (3.30,0.690) {};
		\node[dot] at (4.60,4.296) {};
		\node[dot] at (5.80,1.236) {};
		\node[dot] at (6.80,3.913) {};
		\node[dot] at (7.60,1.629) {};
		\node[dot] at (8.25,3.661) {};
		
		\node[below] at (3.30,0.60) {$\theta^{(0)}$};
		\node[above] at (4.60,4.38) {$q^{(1)}$};
		\node[below] at (5.80,1.15) {$\theta^{(1)}$};
		\node[above] at (6.80,4.00) {$q^{(2)}$};
		\node[below] at (7.60,1.54) {$\theta^{(2)}$};
		
		\draw[->] (3.30,0.690) -- (4.60,4.296);
		\draw[->] (4.60,4.296) -- (5.80,1.236);
		\draw[->] (5.80,1.236) -- (6.80,3.913);
		\draw[->] (6.80,3.913) -- (7.60,1.629);
		\draw[->] (7.60,1.629) -- (8.25,3.661);
		\draw[->,dashed] (8.25,3.661) -- (9.34,2.045);
		
		\node[dot] at (9.40,2.022) {};
		\node[below] at (9.40,1.93) {$\theta^{*}$};
		\node[dot] at (9.40,3.461) {};
		\node[above] at (9.40,3.55) {$q^{*}$};
		\draw[dotted,<->,thick] (9.40,2.13) -- (9.40,3.35);
		\node[right, align=left] at (9.55,2.74)
		{\footnotesize residual gap\\[-2pt]\scriptsize $\KL\big(\tilde p\,\|\,p(\cdot\,;\theta^{*})\big)$};
		
		\node[left] at (3.95,2.49) {\footnotesize E-step};
		\node[right] at (5.25,2.77) {\footnotesize M-step};
	\end{tikzpicture}
	\caption{EM as alternating projection. From an initial model point
		$\theta^{(0)}\in\revised{\mathcal{E}}$, each E-step e-projects onto the
		data manifold $\revised{\mathcal{M}(\tilde p)}$ and each M-step m-projects
		onto the model manifold $\revised{\mathcal{E}}$; \revised{in the flat
			exponential-family setting both steps are exact Bregman projections
			satisfying the corresponding Pythagorean relations
			(Theorem~\ref{thm:main} and Corollary~\ref{cor:mproj}); the free
			energy $\mathcal{L}$ increases at every step by the anchored
			coordinate-ascent argument, with or without flatness.}
		\revised{Generically the two manifolds do not intersect: under standard
			regularity the iterates approach a stationary pair ---
			$q^{*}\in\mathcal{M}(\tilde p)$ and
			$p(\cdot;\theta^{*})\in\mathcal{E}$, locally closest points of the two
			manifolds --- separated by the residual gap
			$\KL\big(q^{*}\,\|\,p(\cdot,\cdot\,;\theta^{*})\big)
			=\KL\big(\tilde p\,\|\,p(\cdot\,;\theta^{*})\big)$, the misfit of the
			best-fitting model. The manifolds intersect, and the gap closes, exactly
			when the model is well specified; otherwise the alternating-projection
			picture is a useful local guide.}}
	\label{fig:em}
\end{figure}

When both the data and model families are flat in the $G/F$ geometry, the
same alternating projections \revised{\eqref{eq:estep}--\eqref{eq:mstep}}
can be run with $D_G$ in place of $\KL$\revised{ --- though the closed
form in \eqref{eq:estep}, with the reference conditional preserved, is
special to Shannon; for a general generator the E-step solution takes
the form of \eqref{eq:soln} and no longer factors through the reference
conditional ---}, giving a Bregman analogue of EM;
its likelihood or evidence interpretation, however, is generator- and
model-dependent\revised{, and for a general generator the monotonicity of a
	Shannon likelihood is no longer automatic}.

\subsection{Variational inference: the inexact E-step}
\label{sec:vi}

Fix the generative model $p(x,z;\theta)=p_0(z)\,p(x\mid z;\theta)$ --- its
functional form specified and $\theta$ held at the current iterate. The
exact E-step computes the posterior
\[
p(z\mid x;\theta)\propto p_0(z)\,p(x\mid z;\theta),
\]
\revised{the conditional part of the e-projection \eqref{eq:estep} onto
	$\mathcal{M}(\tilde p)$}. What is intractable is not the joint, which is
known, but its normalizer,
\[
p(x;\theta)=\int p(x,z;\theta)\,d\mu(z).
\]
One therefore \revised{works per observation --- the projection of
	\S\ref{sec:em-alt} is solved conditional-by-conditional, so nothing is
	lost by fixing one $x$ ---} restricts the candidate $q(z)$ to a tractable
family $\mathcal{Q}$\revised{, such as a parametrized Gaussian family or a
	factorized mean-field family}, and e-projects onto $\mathcal{Q}$ instead:
\[
\min_{q\in\mathcal{Q}}\ \KL\big(q\,\|\,p(z\mid x;\theta)\big)
\quad\Longleftrightarrow\quad
\max_{q\in\mathcal{Q}}\ \big[\E_q\log p(x,z;\theta)+H(q)\big]
=
\max_{q\in\mathcal{Q}}\ELBO(q).
\]
\revised{The objective is the free energy of \S\ref{sec:em-alt} in its
	single-observation form, at fixed $\theta$; \emph{evidence lower bound} is
	the name it carries in this literature, earned by the inequality
	$\ELBO(q)\le\log p(x;\theta)$. The right-hand form involves only the known
	joint and never the intractable marginal, which is what makes $q^{*}$
	computable; the left-hand form shows what the maximization does: it drives
	$q$ toward the posterior.}

With $\mathcal{Q}$ unrestricted the maximizer is the exact posterior, the
Shannon case of the e-projection solution \eqref{eq:soln}, an exponential
tilt of $p_0$\revised{, the prior here playing the role of
	Part~\ref{part:math}'s reference}. It differs from the constrained
e-projections of \S\ref{sec:maxent} and \S\ref{sec:calib} in one respect:
the tilt is the log-likelihood $\log p(x\mid z;\theta)$, given by the
model rather than solved from a moment constraint. Restricting
$\mathcal{Q}$ makes the projection approximate: the residual
\[
\KL\big(q^{*}\,\|\,p(z\mid x;\theta)\big)\ge0,
\]
equivalently the slack in the ELBO, is the cost of the restriction and
vanishes exactly when $\mathcal{Q}$ contains the posterior. A mean-field
$\mathcal{Q}$ makes the projection \revised{decompose coordinatewise
	in $z$}, yielding coordinate-ascent VI updates\revised{ in
	closed form under conditional conjugacy}. \revised{For a parametric family
	$q_\eta$, the maximization proceeds instead by stochastic gradient ascent,
	sampling from $q_\eta$ and differentiating through the samples by
	reparametrization \cite{KingmaWelling2014}; this is the route taken by
	amortized methods.}

\subsection{Amortized inference: autoencoders}
\label{sec:amortized}

{\revisedblock
	The variational step of the preceding subsection has a cost structure
	worth noticing: it solves one optimization per observation, and must solve
	a fresh one for every new $x$. With large data sets, gradient-based inner
	loops, and a model whose $\theta$ is itself being learned, the per-datum
	optimizations dominate. \emph{Amortized} inference removes the inner loop
	by learning the solution map itself: a single network $q_\eta(z\mid x)$,
	trained once, that carries any $x$ to an approximate posterior directly.
	The per-datum inference cost is spread across the data set, and inference
	at a new point becomes a forward pass.
}

\revised{The variational autoencoder realizes this with two networks.} The
\emph{decoder} $p_\theta(x\mid z)$ is the \revised{likelihood factor of
	the} generative model of \S\ref{sec:vi}: it maps a latent $z$ to a
distribution over the data $x$ and plays the M-step role\revised{. Together
	with the prior $p_0(z)$ it defines the joint model, although in a VAE its
	parameters are updated by gradient ascent on the ELBO, an inexact
	coordinate update on the same functional rather than a closed-form
	M-step}. The \emph{encoder} $q_\eta(z\mid x)$ \revised{is the learned
	inference map}, standing in for the E-step. The two networks are trained
jointly by maximizing the ELBO\revised{, summed over the data}.

This \revised{has} the same impute--update skeleton as the data
augmentation algorithm \cite{Tanner1987}, the stochastic counterpart of
EM. There one alternates between drawing
$z\sim p(z\mid x;\theta)$ --- imputing the missing data --- and updating
$\theta$ given the completed data $(x,z)$. The encoder plays the
imputation step and the decoder the complete-data model; the difference is
that data augmentation \emph{samples} the exact posterior, asymptotically
exact but per-datum and iterative, while the encoder \emph{learns} one
parametric map that approximates it, amortized over the data set but only
as accurate as the encoder family allows.

How the encoder is fit is itself the e/m choice: the standard VAE
\cite{KingmaWelling2014,Rezende2014} minimizes the reverse divergence
\[
\KL\big(q_\eta(z\mid x)\,\|\,p_\theta(z\mid x)\big),
\]
an e-projection\revised{, with the inference gap optimized through the
	ELBO even though the posterior itself is unavailable}. By contrast, the
sleep phase of wake--sleep \cite{HintonWakeSleep1995} minimizes the
forward divergence
\[
\KL\big(p_\theta(z\mid x)\,\|\,q_\eta(z\mid x)\big),
\]
an m-projection\revised{, estimated under samples drawn from the model}.
This is the amortized, parametric corner: the encoder family is generally
neither e- nor m-flat, so the exact uniqueness and Pythagorean identity of
\S\ref{sec:projections} need not hold. The autoencoder situates practice
against the exact projections rather than being governed by the theorems.

Generalizing the encoder's divergence beyond KL has so far been pursued
mainly along the $f$-divergence family --- R\'enyi \cite{LiTurner2016},
$\chi^{2}$ \cite{Dieng2017}, and general $f$-divergences \cite{Wan2020}
--- which meets the Bregman family only at KL. Fitting the encoder by a
general $D_G$ would be the amortized Bregman EM \revised{of
	\S\ref{sec:em-alt}}, a direction we return to in \S\ref{sec:future}.

\subsection{Expectation propagation: the m-projection dual}
\label{sec:ep}

Expectation propagation runs the inference step the other way. It
approximates the posterior by an exponential family through repeated local
m-projections that match the moments of the tilted distribution
(Corollary~\ref{cor:mproj}); a single such moment match, without
iteration, is formally analogous to method-of-moments estimation. Where
variational inference e-projects, using reverse KL and therefore
\revised{roughly} mode-seeking behavior, EP \revised{locally} m-projects,
using forward KL and therefore \revised{roughly} mean-seeking behavior ---
the same e/m choice, applied to the inference step.

{\revisedblock
	The mechanics make the projection content exact \cite{Minka2001}. Suppose
	the posterior factors into sites
	\[
	p(z\mid x)\propto p_0(z)\prod_{j=1}^{J}\ell_j(z),
	\]
	the prior times one likelihood factor per observation or block, with the
	latent variable $z$ now shared across them as for a global parameter. EP
	maintains an exponential-family approximation
	\[
	q(z)\propto p_0(z)\prod_j\tilde\ell_j(z),
	\]
	in which each true site $\ell_j$ is replaced by an exponential-family
	factor $\tilde\ell_j$, and cycles through the sites: delete one
	approximate site to form the \emph{cavity}, tilt the cavity by the true
	site, and m-project the tilted distribution back onto the family,
	\[
	q_{-j}\propto\frac{q}{\tilde\ell_j},
	\qquad
	r_j\propto q_{-j}\,\ell_j,
	\qquad
	q^{\mathrm{new}}
	=
	\arg\min_{q'\in\mathcal{E}}\KL\big(r_j\,\|\,q'\big)
	\quad\Longleftrightarrow\quad
	\E_{q^{\mathrm{new}}}[T]=\E_{r_j}[T].
	\]
	The new site is recovered as
	\[
	\tilde\ell_j\propto q^{\mathrm{new}}/q_{-j},
	\]
	and here $\mathcal{E}$ is an exponential family over the latent $z$, not
	the joint model manifold of \S\ref{sec:em-alt}. The tilt has the same
	structure as the tilt-of-the-reference reading of the posterior in
	\S\ref{sec:vi}: the cavity plays the reference $p_0$, and one likelihood
	factor plays the tilt. The projection step is the moment match of
	Corollary~\ref{cor:mproj}; onto the e-flat family $\mathcal{E}$, the
	forward-KL minimization is solved exactly by matching sufficient
	statistics.
}

Each site update is therefore an \emph{exact} flat projection; what is
heuristic is the loop around it. The iteration minimizes no single
divergence --- its fixed points are stationary points of an EP energy
function rather than minimizers of
$\KL\big(p(\cdot\mid x)\,\|\,q\big)$ \cite{Minka2005}. Thus EP's status in
the taxonomy of this section is the mirror image of variational
inference's: VI optimizes a global bound over a restricted family on the
e-side; EP performs locally exact projections without a global objective
on the m-side. The two directions are bridged by \emph{power EP}, which
replaces the forward KL in the projection step by an
$\alpha$-divergence: $\alpha=1$ recovers the EP moment match,
$\alpha\to0$ the reverse-KL update of variational inference, and
intermediate $\alpha$ interpolates \cite{Minka2005}. \revised{This is the
	same $\alpha$/$f$-divergence dial met in the encoder generalizations above,
	now turned from the message-passing side.}

\subsection{Deep generative models and neighboring Bregman geometries}
\label{sec:deep-generative-bregman}

{\revisedblock
	The preceding examples locate several deep generative methods relative to
	the Bregman projection framework. The guiding distinction is the object on
	which the training discrepancy is imposed. In the exact projection theory
	of Part~\ref{part:math}, the objects are densities or measures, and the
	geometry is governed by a separable Bregman divergence together with its
	e/m Pythagorean theorem. In modern deep generative modeling, the same
	theme reappears in several neighboring forms: KL projections for
	variational autoencoders, quadratic losses on score fields for
	score-based diffusion models, quadratic losses on velocity fields for flow
	matching, and $f$-divergence or optimal-transport discrepancies for
	adversarial models.
}

The variational autoencoder lies closest to the projection framework.
\revised{As explained in \S\ref{sec:amortized},} its encoder
$q_\eta(z\mid x)$ is trained through the reverse-KL term in the evidence
lower bound:
\[
\log p_\theta(x)-\ELBO(\eta,\theta;x)
=
\KL\big(q_\eta(z\mid x)\,\|\,p_\theta(z\mid x)\big).
\]
Thus the encoder performs an amortized, restricted e-projection toward the
posterior, in the Shannon Bregman divergence. The decoder update plays the
role of an inexact M-step. The exact Pythagorean theorem need not apply,
because the encoder and decoder are usually curved neural-network
families rather than flat statistical submanifolds, but the KL/Bregman
interpretation is direct \cite{KingmaWelling2014,Rezende2014}.

Score matching sits one step farther away. It does not project densities
onto flat families. Instead, it compares the \emph{score fields}
\[
s_\theta(x)=\nabla_x\log p_\theta(x),
\qquad
s_{\rm data}(x)=\nabla_x\log p_{\rm data}(x),
\]
through the Fisher-divergence-type quadratic criterion
\[
\frac12
\E_{p_{\rm data}}
\big[
\|s_\theta(X)-s_{\rm data}(X)\|^2
\big].
\]
This is a Bregman divergence for the quadratic functional
\[
\mathcal{G}(s)
=
\frac12
\E_{p_{\rm data}}
\big[
\|s(X)\|^2
\big]
\]
on a Hilbert space of square-integrable vector fields:
\[
D_{\mathcal{G}}(s\|t)
=
\frac12
\E_{p_{\rm data}}
\big[
\|s(X)-t(X)\|^2
\big].
\]
Thus score matching is a quadratic Bregman fitting problem, but the
Bregman geometry is placed on the space of scores rather than directly on
the space of densities.

The practical advantage is that the unknown normalizing constant of
$p_\theta$ disappears after taking $\nabla\log p_\theta$. Hyv\"arinen's
integration-by-parts identity rewrites the ideal loss, up to an additive
constant independent of $\theta$, as
\[
\E_{p_{\rm data}}
\left[
\operatorname{div}s_\theta(X)
+
\frac12\|s_\theta(X)\|^2
\right],
\]
under suitable boundary conditions \cite{Hyvarinen2005}. Denoising score
matching and score-based diffusion models use the same quadratic geometry
after perturbing the data by noise, fitting time-indexed scores
$s_\theta(t,x)$ for the marginals of a forward noising process
\cite{Vincent2011,SongErmon2019,Ho2020,Song2021}. Hence score-based
diffusion is naturally adjacent to the Bregman framework, but its
Bregman divergence lives on score fields, not directly on densities.

Flow matching is analogous, with velocity fields replacing score fields.
One specifies or samples a probability path $(p_t)_{0\le t\le1}$ from a
simple base distribution to the data distribution and fits a vector field
$v_\theta(t,x)$ to a target field $u_t(x)$ satisfying the continuity
equation
\[
\partial_t p_t+\nabla\cdot(p_tu_t)=0.
\]
The training loss is quadratic:
\[
\frac12
\E_{t,X_t}
\big[
\|v_\theta(t,X_t)-u_t(X_t)\|^2
\big].
\]
This is exactly the Bregman divergence generated by the quadratic energy
on a function space. Indeed, define
\[
\mathcal{G}(v)
=
\frac12
\E_{t,X_t}
\big[
\|v(t,X_t)\|^2
\big].
\]
Then $\nabla\mathcal{G}(u)=u$ with respect to the
$L^2(dt\,p_t(dx))$ pairing, and therefore
\[
\begin{aligned}
	D_{\mathcal{G}}(v\|u)
	&=
	\mathcal{G}(v)-\mathcal{G}(u)
	-
	\langle \nabla\mathcal{G}(u),v-u\rangle  \\
	&=
	\frac12\E_{t,X_t}\|v(t,X_t)\|^2
	-
	\frac12\E_{t,X_t}\|u(t,X_t)\|^2
	-
	\E_{t,X_t}\langle u(t,X_t),v(t,X_t)-u(t,X_t)\rangle \\
	&=
	\frac12
	\E_{t,X_t}
	\big[
	\|v(t,X_t)-u(t,X_t)\|^2
	\big].
\end{aligned}
\]
Consequently the flow-matching objective is
$D_{\mathcal{G}}(v_\theta\|u)$ for the quadratic generator
$\mathcal{G}$ on velocity fields.

{\revisedblock
	This clarification is important. Flow matching is not a Bregman
	projection of densities. The model is not obtained by minimizing
	$D_G(p\|p_0)$ over an $m$-flat constraint set, nor by minimizing
	$D_G(p_0\|p)$ over an $e$-flat family. Rather, a dynamical transport
	problem is converted into regression of vector fields; the induced
	evolution of densities is governed by the continuity equation, while the
	Bregman object is the squared-error divergence on velocity fields
	\cite{Lipman2023,Albergo2023,Albergo2023StochasticInterpolants,Liu2023FlowStraight}.
	Thus flow matching is close in spirit to the projection framework, but it
	is best classified as a neighboring quadratic-Bregman method rather than
	an exact e/m Bregman projection.
}

Adversarial models occupy a different neighboring region. The original
GAN minimizes, at the optimal discriminator, a Jensen--Shannon divergence
between the data distribution and the model distribution
\cite{Goodfellow2014}. The $f$-GAN formulation replaces this by a general
$f$-divergence \cite{Nowozin2016}, while the Wasserstein GAN uses the
Wasserstein distance, an optimal-transport metric rather than a Bregman
or $f$-divergence \cite{Arjovsky2017}. These methods are therefore best
placed on the $f$-divergence and optimal-transport sides of the landscape,
not inside the Bregman projection theorem.

\paragraph{Kernel discrepancies and MMD estimation.}

Another neighboring region is formed by kernel discrepancy methods for
implicit generative models.  Suppose that \(P_\theta\) is a generative
model from which simulation is easy but whose likelihood is unavailable
or intractable.  \cite{briol2019} propose to estimate
\(\theta\) by minimizing the maximum mean discrepancy
\[
        \operatorname{MMD}_k^2(P_\theta,\widehat P_n)
        =
        \|\mu_{P_\theta}-\mu_{\widehat P_n}\|_{\mathcal H_k}^2
\]
between the model distribution and the empirical distribution, where
\(\mu_P\) denotes the kernel mean embedding of \(P\) in the reproducing
kernel Hilbert space \(\mathcal H_k\).  This is not a Bregman projection
of densities in the sense of Part I: the discrepancy is imposed after
embedding probability measures into an RKHS, rather than through a
separable Bregman divergence on the densities themselves.  Nevertheless,
it has the same statistical role as the neighboring generative-model
objectives discussed above: likelihood is replaced by a geometrically
defined discrepancy, and estimation becomes a minimum-distance problem
over a model family.  In this sense, MMD estimation sits alongside
score matching and flow matching as a neighboring geometry for
generative-model estimation, with the discrepancy living on kernel mean
embeddings rather than on densities, scores, or velocity fields.

\section{Directions for further investigation}
\label{sec:future}

\revised{We close Part~\ref{part:stat} with directions the geometry leaves
open.}

\paragraph{Robustness through the generator.}
Power and Tsallis generators give range-restricted projections
(Remark~\ref{rem:a4}); the generator $G$ is a tuning knob trading
efficiency for \revised{outlier- and heavy-tail-robustness} whose statistical
consequences merit systematic study. \revised{For the exponential-tilting
generator, for instance, the resulting estimator retains $\sqrt{n}$-consistency
under model misspecification --- a global robustness the empirical-likelihood
generator can lose when the moment functions are unbounded
\cite{Schennach2007} --- one concrete entry in this trade-off.} \revised{Other
members occupy other corners: the local-robustness optimum is the Hellinger
member --- minimum-Hellinger-distance estimation is minimax-robust over
shrinking (infinitesimal) contamination neighborhoods
\cite{KitamuraOtsuEvdokimov2013}, sitting in the $f$-divergence direction
rather than the Bregman family, the two meeting only at the KL point occupied
by exponential tilting and empirical likelihood (\S\ref{sec:calib}) --- while
among higher-order refinements the empirical-likelihood member is
Bartlett-correctable \cite{DiCiccioHallRomano1991} and exponential tilting is
not \cite{JingWood1996}. No single generator is best across efficiency, global
and local robustness, and higher-order accuracy at once.} This raises the question of how to \emph{choose} $G$
from data. The calibration cost
$D_G(\hat\omega(\hat\theta_G)\,\|\,\hat P_n)$ of \S\ref{sec:calib} is a
natural goodness-of-fit criterion, but since $\hat\theta_G$ already
minimizes it, comparing generators by its in-sample value double-dips;
evaluating it by sample-splitting --- fitting the weights and
$\hat\theta_G$ on one fold and scoring the calibration cost on a held-out
fold, with cross-fitting \cite{Chernozhukov2018} to recover efficiency ---
gives an honest criterion, informative mainly under misspecification or
contamination (under correct specification the rescaled cost is
first-order generator-invariant). This parallels the data-driven selection
of the tuning parameter of the density-power divergence \cite{Basu1998}
--- itself a separable Bregman divergence --- by an estimated
mean-squared-error criterion \cite{WarwickJones2005,BasakBasuJones2021},
with the held-out calibration cost as a fit-based alternative.

\paragraph{Identifiability under under-determined constraints.}
When the moments do not pin down the target, \revised{the estimand is not
identified --- distinct feasible distributions carry it to different
values ---} \revised{as is} the rule in missing-not-at-random and data-integration
problems. \revised{The projection still returns a single point (Theorem~\ref{thm:main}), so
the choice of $G$ selects one value within the identified set;
characterizing that set, and how $G$ selects within it, is largely open.}

\paragraph{Efficiency theory.}
The dual stationarity $\E_{p^{*}_\lambda}[T]=c$ is a $Z$-estimating
equation \revised{whose estimating function has the generalized information matrix
of Proposition~\ref{prop:est} as its Jacobian}; a full asymptotic and
semiparametric-efficiency theory in this language would unify scattered
results.

\paragraph{A general-Bregman encoder.}
The variational autoencoder of \revised{\S\ref{sec:amortized}} fits its encoder by a
reverse-KL projection. Replacing that by a general separable Bregman
divergence $D_G(q\,\|\,p)$ --- the amortized form of the Bregman EM --- is
largely unexplored: the popular non-KL encoders (R\'enyi, $\chi^{2}$, and
$f$-divergence variational inference) generalize along the $f$-divergence
family, which intersects the Bregman family only at KL \cite{Wan2020},
while the Bregman direction has so far appeared mainly in robust
$\beta$-divergence inference \cite{Futami2018} and the loss-plus-divergence
framework of generalized variational inference \cite{Knoblauch2019}.
Characterizing which generators $G$ yield well-posed, identifiable, or
outlier-robust amortized inference would carry the geometry of
Part~\ref{part:math} into deep generative models.

\section{Summary}
\label{sec:summary}

\begin{table}[h]
	\centering
	\small
	\resizebox{1.1\textwidth}{!}{%
		\begin{tabular}{@{}llll@{}}
			\toprule
			Method & Projection / discrepancy & Generator / geometry & Status \\
			\midrule
			GLM / maximum likelihood
			& e- (natural) $=$ m- (mean)
			& cumulant $F$
			& exact (canonical) \\
			
			Maximum entropy / MLE
			& e- \revised{and} m- (intersection)
			& Shannon
			& exact \\
			
			Survey calibration
			& e-projection (one slice)
			& Shannon, quadratic, \revised{or general $G$}
			& exact \\
			
			Over-identified model
			& e- per slice, \revised{projection over curved union}
			& any \revised{admissible $G$} (GMM, EL, ET)
			& exact per slice / asymptotic \\
			
			EM
			& alternating e/m
			& Shannon
			& exact if flat \\
			
			Variational inference
			& restricted e-projection
			& Shannon
			& curved approximation \\
			
			\revised{Variational autoencoder}
			& \revised{amortized restricted e-projection}
			& \revised{Shannon / KL}
			& \revised{amortized approximation} \\
			
			Expectation propagation
			& m-projection
			& Shannon
			& \revised{locally exact; no global objective} \\
			
			\revised{Score matching / diffusion}
			& \revised{quadratic loss on score fields}
			& \revised{quadratic Bregman geometry}
			& \revised{neighboring field-level analogue} \\
			
			\revised{Flow matching}
			& \revised{quadratic loss on velocity fields}
			& \revised{quadratic Bregman geometry}
			& \revised{neighboring field-level analogue} \\
			
			\revised{GAN / $f$-GAN / WGAN}
			& \revised{$f$-divergence or transport discrepancy}
			& \revised{$f$-geometry / optimal transport}
			& \revised{outside Bregman projection theorem} \\
			\bottomrule
		\end{tabular}%
	}
	\caption{The statistical methods of Part~\ref{part:stat} organized by the
		projection or discrepancy they use. The final column flags whether the
		identification is an exact Bregman projection, exact only under
		flatness/regularity, a restricted or amortized approximation, or a
		neighboring divergence analogy.}
\end{table}

The whole picture is one construction. A generator $G$ and its conjugate
$F$ give two coordinate systems; a reference is projected onto a flat set;
the Pythagorean theorem \eqref{eq:pyth} holds and, applied through $G$ or
through $F$, produces the e- and m-projections (Theorem~\ref{thm:main},
Corollary~\ref{cor:mproj}, Proposition~\ref{prop:swap}). The
\revised{canonical-link} generalized linear model shows the two
projections concretely --- the score equation is the Pythagorean
orthogonality, the fit is an e-projection in the natural coordinate and an
m-projection in the mean coordinate.

The remaining methods are organized by the same construction, but not all
literally. In the flat cases --- maximum entropy, calibration, and EM on
an exponential family --- they are exact Bregman projections obeying the
Pythagorean theorem. \revised{Expectation propagation sits between the exact
	and approximate categories: each site update is an exact flat
	m-projection, but the loop around those local projections optimizes no
	single global divergence.} In the restricted or amortized cases ---
mean-field variational inference and variational autoencoders --- the
same KL/Bregman projection is performed only within a curved or
neural-network family, so the exact Pythagorean theorem becomes a guiding
geometry rather than a theorem controlling the fitted point.

\revised{The modern generative-model examples lie one step farther away.
	Score matching, denoising diffusion, and flow matching do not project
	densities onto flat Bregman families. Instead, they move the discrepancy
	to an auxiliary function space: score fields for score matching and
	velocity fields for flow matching. Their squared-error objectives are
	Bregman divergences for quadratic generators on those function spaces, so
	they are neighboring quadratic-Bregman methods rather than exact e/m
	density projections. GANs, $f$-GANs, and Wasserstein GANs move farther
	still, replacing the Bregman projection geometry by $f$-divergence or
	optimal-transport discrepancies.}

\revised{Thus the point of the paper is not that every method is literally
	the same projection. Rather, the Bregman projection theorem supplies the
	organizing center: exact flat projections occupy the core; restricted,
	amortized, and local algorithms are controlled approximations around it;
	and score-, flow-, adversarial-, and transport-based generative models sit
	in neighboring geometries determined by the object on which their training
	discrepancy is imposed.}

\appendix
\section{Notation}

{ 
	Throughout the paper, $G$ denotes the primal Bregman generator and
	$F=G^{*}$ its convex conjugate. The pointwise and separable Bregman
	divergences generated by $G$ are denoted by $d_G$ and $D_G$, respectively,
	and $H_G$ denotes the associated generalized entropy.
}

{ 
	The dual coordinate maps are
}
\[
\theta=\nabla G(p),
\qquad
p=\nabla F(\theta).
\]
{ 
	Thus $p$ is the m-coordinate and $\theta$ is the e-coordinate. The maps
	$\nabla G$ and $\nabla F$ are called the link and inverse link,
	respectively.}

{ 
	The symbol $\mathcal{M}$ denotes a moment, or m-flat, set; when the target
	moment needs to be displayed, we write $\mathcal{M}(c)$. The symbol
	$\mathcal{E}$ denotes an exponential, or e-flat, family. The statistic is
	$T$, the target moment is $c$, the Lagrange multiplier is $\lambda$, and
	$\mathcal{N}$ denotes the constraint null space. The dual objective is
	denoted by \revised{$\Phi$}, and the optimal dual value is denoted by
	$\mathcal{Z}_G$.}

{ 
	For generalized linear models, $F$ denotes the cumulant, viewed as the
	natural-coordinate generator, and $G=F^{*}$ denotes the corresponding
	mean-domain generator. The natural parameter is $\theta=X\beta$, the model
	family in natural coordinates is $\mathcal{E}_X$, the mean is
	$\mu=\nabla F(\theta)$, the saturated natural parameter is $\theta_y$, and
	the fitted natural and mean parameters are $\hat\theta$ and $\hat\mu$.}

{ 
	For survey calibration, $\mathcal{U}$ denotes the finite population,
	$N=|\mathcal{U}|$ its size, $A\subset\mathcal{U}$ the sample, and $n=|A|$
	the sample size. The first-order inclusion probability is $\pi_i$, the
	design weight is $d_i=\pi_i^{-1}$, the calibrated weight is $w_i$, the
	known auxiliary total is $X=\sum_{i\in\mathcal{U}}x_i$, and the study total
	is $Y=\sum_{i\in\mathcal{U}}y_i$.}

{ 
	For over-identified moment models, $Z_i$ denotes the observed data,
	$U(\theta;Z_i)$ the estimating function, $r$ the number of moment
	conditions, $p$ the dimension of the parameter $\theta$, $\hat P_n$ the
	empirical anchor, $\omega$ the positive sample-weight vector,
	$\mathcal{M}(\theta)$ the fixed-$\theta$ moment slice, and
	$\mathcal{M}_\Theta=\bigcup_{\theta\in\Theta}\mathcal{M}(\theta)$ the
	curved union of moment slices.}

\bibliographystyle{plain}
\bibliography{references_2}

@inproceedings{Rezende2014,
	author    = {Danilo Jimenez Rezende and Shakir Mohamed and Daan Wierstra},
	title     = {Stochastic Backpropagation and Approximate Inference in Deep Generative Models},
	booktitle = {Proceedings of the 31st International Conference on Machine Learning},
	series    = {Proceedings of Machine Learning Research},
	volume    = {32},
	number    = {2},
	pages     = {1278--1286},
	year      = {2014},
	publisher = {PMLR}
}

@article{Vincent2011,
	author  = {Pascal Vincent},
	title   = {A Connection Between Score Matching and Denoising Autoencoders},
	journal = {Neural Computation},
	volume  = {23},
	number  = {7},
	pages   = {1661--1674},
	year    = {2011}
}

@inproceedings{SongErmon2019,
	author    = {Yang Song and Stefano Ermon},
	title     = {Generative Modeling by Estimating Gradients of the Data Distribution},
	booktitle = {Advances in Neural Information Processing Systems (NeurIPS)},
    pages   = {11918--11930},
	year      = {2019}
}

@inproceedings{Lipman2023,
	author    = {Yaron Lipman and Ricky T. Q. Chen and Heli Ben-Hamu and Maximilian Nickel and Matt Le},
	title     = {Flow Matching for Generative Modeling},
	booktitle = {International Conference on Learning Representations (ICLR)},
	year      = {2023}
}

@inproceedings{Albergo2023,
	author    = {Michael S. Albergo and Eric Vanden-Eijnden},
	title     = {Building Normalizing Flows with Stochastic Interpolants},
	booktitle = {International Conference on Learning Representations (ICLR)},
	year      = {2023}
}

@article{Albergo2023StochasticInterpolants,
	author  = {Michael S. Albergo and Nicholas M. Boffi and Eric Vanden-Eijnden},
	title   = {Stochastic Interpolants: A Unifying Framework for Flows and Diffusions},
	journal = {Journal of Machine Learning Research},
  year    = {2025},
  volume  = {26},
  number  = {209},
  pages   = {1--80}
}

@inproceedings{Liu2023FlowStraight,
	author    = {Xingchao Liu and Chengyue Gong and Qiang Liu},
	title     = {Flow Straight and Fast: Learning to Generate and Transfer Data with Rectified Flow},
	booktitle = {International Conference on Learning Representations (ICLR)},
	year      = {2023}
}

@article{Csiszar1975,
  author  = {Imre Csisz\'ar},
  title   = {{$I$}-divergence geometry of probability distributions and minimization problems},
  journal = {The Annals of Probability},
  volume  = {3},
  number  = {1},
  pages   = {146--158},
  year    = {1975}
}

@article{CsiszarTusnady1984,
  author  = {Imre Csisz\'ar and G\'abor Tusn\'ady},
  title   = {Information geometry and alternating minimization procedures},
  journal = {Statistics and Decisions},
  note    = {Supplement Issue 1, 205--237},
  year    = {1984}
}

@book{AmariNagaoka,
  author    = {Shun-ichi Amari and Hiroshi Nagaoka},
  title     = {Methods of Information Geometry},
  publisher = {American Mathematical Society and Oxford University Press},
  year      = {2000}
}

@article{Amari1995,
  author  = {Shun-ichi Amari},
  title   = {Information geometry of the {EM} and em algorithms for neural networks},
  journal = {Neural Networks},
  volume  = {8},
  number  = {9},
  pages   = {1379--1408},
  year    = {1995}
}

@article{Zhang2004,
  author  = {Jun Zhang},
  title   = {Divergence function, duality, and convex analysis},
  journal = {Neural Computation},
  volume  = {16},
  number  = {1},
  pages   = {159--195},
  year    = {2004}
}

@article{Eguchi,
  author  = {Noboru Murata and Takashi Takenouchi and Takafumi Kanamori and Shinto Eguchi},
  title   = {Information geometry of {U}-boost and {B}regman divergence},
  journal = {Neural Computation},
  volume  = {16},
  number  = {7},
  pages   = {1437--1481},
  year    = {2004}
}

@article{Banerjee2005,
  author  = {Arindam Banerjee and Srujana Merugu and Inderjit S. Dhillon and Joydeep Ghosh},
  title   = {Clustering with {B}regman divergences},
  journal = {Journal of Machine Learning Research},
  volume  = {6},
  number  = {58},
  pages   = {1705--1749},
  year    = {2005}
}

@article{NelderWedderburn1972,
  author  = {John A. Nelder and Robert W. M. Wedderburn},
  title   = {Generalized linear models},
  journal = {Journal of the Royal Statistical Society, Series A},
  volume  = {135},
  number  = {3},
  pages   = {370--384},
  year    = {1972}
}

@book{McCullaghNelder1989,
  author    = {Peter McCullagh and John A. Nelder},
  title     = {Generalized Linear Models},
  edition   = {2nd},
  publisher = {Chapman \& Hall/CRC},
  year      = {1989}
}

@incollection{NealHinton1998,
  author    = {Radford M. Neal and Geoffrey E. Hinton},
  title     = {A view of the {EM} algorithm that justifies incremental, sparse, and other variants},
  booktitle = {Learning in Graphical Models},
  editor    = {Michael I. Jordan},
  pages     = {355--368},
  publisher = {Kluwer Academic Publishers},
  year      = {1998}
}

@inproceedings{KingmaWelling2014,
  author    = {Diederik P. Kingma and Max Welling},
  title     = {Auto-encoding variational {B}ayes},
  booktitle = {International Conference on Learning Representations (ICLR)},
  year      = {2014}
}

@article{Tanner1987,
  author  = {Martin A. Tanner and Wing Hung Wong},
  title   = {The calculation of posterior distributions by data augmentation},
  journal = {Journal of the American Statistical Association},
  volume  = {82},
  number  = {398},
  pages   = {528--540},
  year    = {1987}
}

@article{HintonWakeSleep1995,
  author  = {Geoffrey E. Hinton and Peter Dayan and Brendan J. Frey and Radford M. Neal},
  title   = {The wake-sleep algorithm for unsupervised neural networks},
  journal = {Science},
  volume  = {268},
  number  = {5214},
  pages   = {1158--1161},
  year    = {1995}
}

@inproceedings{LiTurner2016,
  author    = {Yingzhen Li and Richard E. Turner},
  title     = {R\'enyi divergence variational inference},
  booktitle = {Advances in Neural Information Processing Systems (NeurIPS)},
  pages   = {1081--1089},
  year      = {2016}
}

@inproceedings{Dieng2017,
  author    = {Adji B. Dieng and Dustin Tran and Rajesh Ranganath and John Paisley and David M. Blei},
  title     = {Variational inference via {$\chi$} upper bound minimization},
  booktitle = {Advances in Neural Information Processing Systems (NeurIPS)},
  pages     = {2729--2738},
  year      = {2017}
}

@inproceedings{Wan2020,
  author    = {Neng Wan and Dapeng Li and Naira Hovakimyan},
  title     = {{$f$}-divergence variational inference},
  booktitle = {Advances in Neural Information Processing Systems (NeurIPS)},
  pages     = {17370--17379},
  year      = {2020}
}

@inproceedings{Futami2018,
  author    = {Futoshi Futami and Issei Sato and Masashi Sugiyama},
  title     = {Variational inference based on robust divergences},
  booktitle = {International Conference on Artificial Intelligence and Statistics (AISTATS)},
  series    = {Proceedings of Machine Learning Research},
  volume    = {84},
  pages     = {813--822},
  year      = {2018}
}

@article{Knoblauch2019,
  author  = {Jeremias Knoblauch and Jack Jewson and Theodoros Damoulas},
  title   = {Generalized variational inference: Three arguments for deriving new posteriors},
  journal = {arXiv preprint arXiv:1904.02063},
  year    = {2019}
}

@inproceedings{Goodfellow2014,
  author    = {Ian J. Goodfellow and Jean Pouget-Abadie and Mehdi Mirza and Bing Xu and David Warde-Farley and Sherjil Ozair and Aaron Courville and Yoshua Bengio},
  title     = {Generative adversarial nets},
  booktitle = {Advances in Neural Information Processing Systems (NeurIPS)},
  pages     = {2672--2680},
  year      = {2014}
}

@inproceedings{Nowozin2016,
  author    = {Sebastian Nowozin and Botond Cseke and Ryota Tomioka},
  title     = {{$f$}-{GAN}: Training generative neural samplers using variational divergence minimization},
  booktitle = {Advances in Neural Information Processing Systems (NeurIPS)},
  pages     = {271--279},
  year      = {2016}
}

@inproceedings{Arjovsky2017,
  author    = {Martin Arjovsky and Soumith Chintala and L\'eon Bottou},
  title     = {{W}asserstein generative adversarial networks},
  booktitle = {International Conference on Machine Learning (ICML)},
  series    = {Proceedings of Machine Learning Research},
  volume    = {70},
  pages     = {214--223},
  year      = {2017}
}

@inproceedings{Ho2020,
  author    = {Jonathan Ho and Ajay Jain and Pieter Abbeel},
  title     = {Denoising diffusion probabilistic models},
  booktitle = {Advances in Neural Information Processing Systems (NeurIPS)},
  pages     = {6840--6851},
  year      = {2020}
}

@inproceedings{Song2021,
  author    = {Yang Song and Jascha Sohl-Dickstein and Diederik P. Kingma and Abhishek Kumar and Stefano Ermon and Ben Poole},
  title     = {Score-based generative modeling through stochastic differential equations},
  booktitle = {International Conference on Learning Representations (ICLR)},
  year      = {2021}
}

@article{Hyvarinen2005,
  author  = {Aapo Hyv\"arinen},
  title   = {Estimation of non-normalized statistical models by score matching},
  journal = {Journal of Machine Learning Research},
  volume  = {6},
  number = {24},
  pages   = {695--709},
  year    = {2005}
}

@article{DevilleSarndal1992,
  author  = {Jean-Claude Deville and Carl-Erik S\"arndal},
  title   = {Calibration estimators in survey sampling},
  journal = {Journal of the American Statistical Association},
  volume  = {87},
  number  = {418},
  pages   = {376--382},
  year    = {1992}
}

@article{Hansen1982,
  author  = {Lars Peter Hansen},
  title   = {Large sample properties of generalized method of moments estimators},
  journal = {Econometrica},
  volume  = {50},
  number  = {4},
  pages   = {1029--1054},
  year    = {1982}
}

@article{Owen1988,
  author  = {Art B. Owen},
  title   = {Empirical likelihood ratio confidence intervals for a single functional},
  journal = {Biometrika},
  volume  = {75},
  number  = {2},
  pages   = {237--249},
  year    = {1988}
}

@article{Schennach2007,
  author  = {Susanne M. Schennach},
  title   = {Point estimation with exponentially tilted empirical likelihood},
  journal = {The Annals of Statistics},
  volume  = {35},
  number  = {2},
  pages   = {634--672},
  year    = {2007}
}

@article{KitamuraOtsuEvdokimov2013,
  author  = {Yuichi Kitamura and Taisuke Otsu and Kirill Evdokimov},
  title   = {Robustness, infinitesimal neighborhoods, and moment restrictions},
  journal = {Econometrica},
  volume  = {81},
  number  = {3},
  pages   = {1185--1201},
  year    = {2013}
}

@article{DiCiccioHallRomano1991,
  author  = {Thomas J. DiCiccio and Peter Hall and Joseph P. Romano},
  title   = {Empirical likelihood is {Bartlett}-correctable},
  journal = {The Annals of Statistics},
  volume  = {19},
  number  = {2},
  pages   = {1053--1061},
  year    = {1991}
}

@article{JingWood1996,
  author  = {Bing-Yi Jing and Andrew T. A. Wood},
  title   = {Exponential empirical likelihood is not {Bartlett} correctable},
  journal = {The Annals of Statistics},
  volume  = {24},
  number  = {1},
  pages   = {365--369},
  year    = {1996}
}

@article{Kitamura1997,
  author  = {Yuichi Kitamura and Michael Stutzer},
  title   = {An information-theoretic alternative to generalized method of moments estimation},
  journal = {Econometrica},
  volume  = {65},
  number  = {4},
  pages   = {861--874},
  year    = {1997}
}

@article{NeweySmith2004,
  author  = {Whitney K. Newey and Richard J. Smith},
  title   = {Higher order properties of {GMM} and generalized empirical likelihood estimators},
  journal = {Econometrica},
  volume  = {72},
  number  = {1},
  pages   = {219--255},
  year    = {2004}
}

@unpublished{kim2026,
  author  = {Jae Kwang Kim and Yonghyun Kwon and Yumou Qiu},
  title   = {Bregman projection for calibration estimation in survey sampling},
  year    = {2026},
  note    = {submitted (https://arxiv.org/abs/2603.20780)}
}

@article{Chernozhukov2018,
  author  = {Victor Chernozhukov and Denis Chetverikov and Mert Demirer and Esther Duflo and Christian Hansen and Whitney Newey and James Robins},
  title   = {Double/debiased machine learning for treatment and structural parameters},
  journal = {The Econometrics Journal},
  volume  = {21},
  number  = {1},
  pages   = {C1--C68},
  year    = {2018}
}

@article{Basu1998,
  author  = {Ayanendranath Basu and Ian R. Harris and Nils L. Hjort and M. C. Jones},
  title   = {Robust and efficient estimation by minimising a density power divergence},
  journal = {Biometrika},
  volume  = {85},
  number  = {3},
  pages   = {549--559},
  year    = {1998}
}

@article{WarwickJones2005,
  author  = {Janette Warwick and M. Chris Jones},
  title   = {Choosing a robustness tuning parameter},
  journal = {Journal of Statistical Computation and Simulation},
  volume  = {75},
  number  = {7},
  pages   = {581--588},
  year    = {2005}
}

@article{BasakBasuJones2021,
  author  = {Sancharee Basak and Ayanendranath Basu and M. C. Jones},
  title   = {On the `optimal' density power divergence tuning parameter},
  journal = {Journal of Applied Statistics},
  volume  = {48},
  number  = {3},
  pages   = {536--556},
  year    = {2021}
}

@article{Jaynes1957,
  author  = {Edwin T. Jaynes},
  title   = {Information theory and statistical mechanics},
  journal = {Physical Review},
  volume  = {106},
  number  = {4},
  pages   = {620--630},
  year    = {1957}
}

@article{Phillips2006,
  author  = {Steven J. Phillips and Robert P. Anderson and Robert E. Schapire},
  title   = {Maximum entropy modeling of species geographic distributions},
  journal = {Ecological Modelling},
  volume  = {190},
  number  = {3--4},
  pages   = {231--259},
  year    = {2006},
  doi     = {10.1016/j.ecolmodel.2005.03.026}
}

@article{Elith2011,
  author  = {Jane Elith and Steven J. Phillips and Trevor Hastie and Miroslav Dud\'ik and Yung En Chee and Colin J. Yates},
  title   = {A statistical explanation of {MaxEnt} for ecologists},
  journal = {Diversity and Distributions},
  volume  = {17},
  number  = {1},
  pages   = {43--57},
  year    = {2011},
  doi     = {10.1111/j.1472-4642.2010.00725.x}
}

@article{RennerWarton2013,
  author  = {Ian W. Renner and David I. Warton},
  title   = {Equivalence of {MAXENT} and {P}oisson point process models for species distribution modeling in ecology},
  journal = {Biometrics},
  volume  = {69},
  number  = {1},
  pages   = {274--281},
  year    = {2013},
  doi     = {10.1111/j.1541-0420.2012.01824.x}
}

@book{Harte2011,
  author    = {John Harte},
  title     = {Maximum Entropy and Ecology: A Theory of Abundance, Distribution, and Energetics},
  publisher = {Oxford University Press},
  year      = {2011}
}

@article{GneitingRaftery2007,
  author  = {Tilmann Gneiting and Adrian E. Raftery},
  title   = {Strictly proper scoring rules, prediction, and estimation},
  journal = {Journal of the American Statistical Association},
  volume  = {102},
  number  = {477},
  pages   = {359--378},
  year    = {2007}
}

@article{GrunwaldDawid2004,
  author  = {Peter D. Gr{\"u}nwald and A. Philip Dawid},
  title   = {Game theory, maximum entropy, minimum discrepancy and robust {B}ayesian decision theory},
  journal = {The Annals of Statistics},
  volume  = {32},
  number  = {4},
  pages   = {1367--1433},
  year    = {2004}
}

@article{HorvitzThompson1952,
  author  = {Daniel G. Horvitz and Donovan J. Thompson},
  title   = {A generalization of sampling without replacement from a finite universe},
  journal = {Journal of the American Statistical Association},
  volume  = {47},
  number  = {260},
  pages   = {663--685},
  year    = {1952}
}

@inproceedings{Minka2001,
  author    = {Thomas P. Minka},
  title     = {Expectation propagation for approximate {B}ayesian inference},
  booktitle = {Proceedings of the Seventeenth Conference on Uncertainty in Artificial Intelligence},
  pages     = {362--369},
  year      = {2001}
}

@techreport{Minka2005,
  author      = {Thomas P. Minka},
  title       = {Divergence measures and message passing},
  institution = {Microsoft Research},
  number      = {MSR-TR-2005-173},
  year        = {2005}
}

@unpublished{briol2019,
    author = 
{Briol, F.-X. and Barp, A. and  Duncan, A. B. and Girolami, M.},
 year = {2019}, 
 title = {Statistical inference for generative models with maximum mean discrepancy}, 
 note = {arXiv:1906.05944}
 }

\end{document}